\newcommand\ra{\rightarrow}
\newcommand\e{\varepsilon}
\newcommand\lan{\mathcal{L}}
\newcommand\perms{\mathcal P}
\newcommand\diffrl{D_{\rho,\lambda}}
\newcommand\difflu{D_{\lambda,\mu}}
\newtheorem{thm}{Theorem}[section]
\newtheorem{prop}[thm]{Proposition}
\newtheorem{lem}[thm]{Lemma}
\newtheorem{defn}[thm]{Definition}
 \newcommand{\HU}{MR645539}
 \newcommand{\Estacks}{MR2240774}
\newcommand{\Knuth}{MR0445948}
\newcommand{\Bona}{MR1485138}
\newcommand{\Gessel}{MR1041448}
\newcommand{\SimSch}{MR829358}
\newcommand{\AALRa}{MR2176523}
\newcommand{\AALRb}{MR1453845}
\newcommand{\AALRc}{MR2000167}
\newcommand{\aerwz}{MR2199982}
\newcommand{\twopatterns}{MR2577304}
\newcommand{\Flajolet}{MR2483235}
\newcommand{\ElderCS}{ElderCS}
\newcommand{\wikipatterns}{wikipatterns}
\newcommand{\tony}{tony}
\newcommand{\PR}{PR}
\title[Permutations generated by a  depth 2 and  infinite stack]{Permutations generated by a  depth 2 and  infinite stack in series are algebraic}
\author[M. Elder]{Murray Elder}
\author[G. Lee]{Geoffrey  Lee}
\address{Mathematics,
University of Newcastle,
Callaghan NSW 2308, Australia}
\email{murray.elder@newcastle.edu.au}
\email{geoffrey.a.lee@uon.edu.au}
\author[A. Rechnitzer]{Andrew Rechnitzer}
\address{Department of Mathematics, University of British Columbia, Vancouver, British Columbia,  V6T-1Z2, Canada}
\email{andrewr@math.ubc.ca}
\keywords{Pattern avoiding permutation, algebraic generating function, context-free language}
\subjclass[2010]{05A05}
\date{\today}
\begin{document}

\begin{abstract}
We prove that the class of permutations generated by passing  an ordered sequence $12\dots n$ through a stack of depth 2 and an infinite stack in series 
is in bijection with an unambiguous context-free language, where a permutation of length $n$ is encoded by a string of length $3n$. 
 It follows that the sequence counting the number of permutations of each length has an algebraic generating function. We use the explicit context-free language to compute the generating function:
\begin{align*}
\sum_{n\geq 0} c_n t^n &= 
\frac{(1+q)\left(1+5q-q^2-q^3-(1-q)\sqrt{(1-q^2)(1-4q-q^2)}\right)}{8q}
\end{align*}
where $c_n$ is the number of permutations of length $n$ that can be generated, 
and $q \equiv q(t) = \frac{1-2t-\sqrt{1-4t}}{2t}$ is a simple variant of the 
Catalan generating function. This in turn implies that $c_n^{1/n} \to 
2+2\sqrt{5}$.
\end{abstract}
\maketitle

\section{Introduction}

 Let $p=p_1p_2\dots p_n$ and $q=q_1q_2\dots q_k$ be permutations 
of length $n\geq k$.
We say  $p$ {\em avoids} $q$ if there are no $k$ indices $i_1<\dots<i_k$ so that  for all $s,t$,
\[p_{i_s}<p_{i_t} \ \ \ \ \ \mathrm{if} \  \mathrm{and}\  \mathrm{only}\ \mathrm{if}  \ \ \ \ \ q_{s}<q_{t}.\]
For example, $25413$ avoids $123$ since it has no increasing subsequence of length 3.

Interest in sets of permutations that avoid a small set of ``patterns'' arose naturally in the study of stack-sorting (or equivalently stack-generating) algorithms.  Knuth showed that a permutation $p$ can be generated by passing the ordered sequence $12\dots |p|$ through an infinite  stack if and only if $p$ avoids $312$, and that permutations of length $n$ avoiding $312$ are counted by the Catalan numbers~\cite{\Knuth}. 






If $\mathfrak q$ is a list of permutations, let $Av_n(\mathfrak q)$ be the set of permutations of length $n$  that avoid  $q$ for each $q\in\mathfrak q$. 
We call $Av(\mathfrak q)=\bigcup_{n=0}^\infty Av_n(\mathfrak q)$ a {\em pattern-avoidance class}. A {\em basis} for a pattern avoidance class $Av(\mathfrak q)$ is a  set $\mathfrak p$ of pairwise avoiding permutations so that $Av(\mathfrak p)=Av(\mathfrak q)$. A class is {\em finitely based} if it is equal to $Av(\mathfrak p)$ for $\mathfrak p$ finite.
 The first author proved that the class of permutations generated by a stack of depth two and an infinite stack in series has a finite basis consisting of 20 permutations  \cite{\Estacks}.

The list of  pattern-avoidance classes  for which a generating function for the sequence counting  $Av_n(\mathfrak q)$ has been computed, or shown to be rational, algebraic or non-algebraic, is limited. 
Classes avoiding a single pattern of length $3$ are enumerated by the Catalan numbers \cite{\Knuth, \SimSch} and so have an  algebraic generating function. 
For length four, $Av(\{1342\})$ has an algebraic generating function \cite{\Bona}, $Av(\{1234\})$ has a generating function that is D-finite but not algebraic \cite{\Gessel}, and a closed form generating function for $Av(\{1324\})$ has not be found \cite{\aerwz, \tony}.
 It is known that  for any pattern $p$ of length four, $Av(\{p\})$ is in bijection with one of these three classes.
For single patterns of length greater than four, and classes avoiding two or more patterns, various isolated results are known \cite{\twopatterns, \wikipatterns}.



In this article we consider the class of permutations generated by passing an ordered sequence through a stack of depth 2 and infinite stack is series, which was shown to be finitely based by the first author \cite{\Estacks}.  The more general case of two infinite stacks in series has not been enumerated, and this work can be seen as a step towards this.  
Pierrot and   Rossin  recently proved a polynomial time algorithm to decide if a permutation can be sorted by two  stacks in series \cite{\PR}. The number of permutations sortable by 2 stacks in {\em  parallel} was recently solved by Albert and Bousquet-M\'elou~\cite{albert2014}. 

Several authors have considered the language-theoretic complexity of pattern avoidance classes --- see for example \cite{ \AALRc, \AALRa, \AALRb, \ElderCS}. 
Atkinson, Livesey, and Tulley \cite{\AALRb} showed that the set of permutations generated by passing an ordered sequence through a finite {\em token-passing network} is in bijection with a regular language. Initially we applied this technique to the finite network consisting of a stack of depth $2$ followed by a stack on depth $k$ in series, constructing a sequence of languages and  corresponding rational generating functions for small values of $k$. 
As $k$ increased, the rational generating functions appeared to converge to the algebraic function given in Theorem~\ref{thm:gfun} below.
However, his method does not constitute a proof. To prove the result we instead follow another path --- 
we establish a bijection between permutations generated and an unambiguous context-free language.
The generating function is then guarenteed to be algebraic by a well known theorem of Chomsky and Sch\"utzenberger. 

The main work in this article is to establish the bijection with the context-free language. It has been suggested that the method employed to transform the relatively simple pushdown-automaton description of the language to the quartic generating function should be much easier than the method we detail here. We would welcome any insights into this --- in our approach we merely apply the standard theory, and give the details for an interested reader (perhaps a student reading the paper).

\section{Acknowledgements}
The bulk of this paper is the result of a University of Newcastle summer vacation project
undertaken by the second author under supervision of the first. 
Research was supported by the Australian Research Council (ARC)  grant
FT110100178, and  the Natural Sciences and Engineering Research Council of Canada (NSERC).






\section{Establishing a bijection}\label{sec:bijection}

Let $\perms$ be the set of permutations that can be generated by a stack of depth 2 and infinite stack in series,
and fix $\rho,\lambda,\mu$ as the stack moves indicated in Figure~\ref{fig:rholambdamu}.


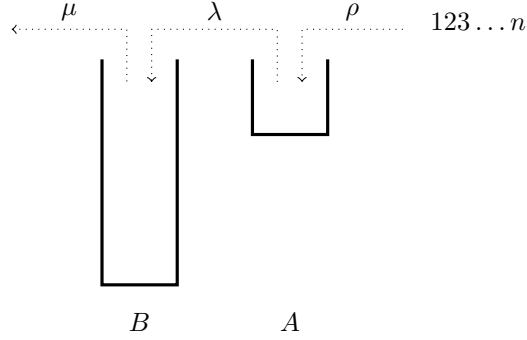
\begin{figure}[h!]
\begin{tikzpicture}
\draw[very thick] (0,2) -- (0,-1) -- (1,-1) -- (1,2);
\draw (2.5,-1.5) node {$A$};
\draw[very thick] (2,2) -- (2,1) -- (3,1) -- (3,2);
\draw (0.5,-1.5) node {$B$};
\draw[dotted, ->] (4,2.4) -- node[above] {$\rho$} (2.66,2.4) -- (2.66,1.7);

\draw (5,2.5) node {$123\dots n$};
\draw[dotted, bend left, ->] (2.33,1.7) -- (2.33,2.4) -- node[above] {$\lambda$} (0.66,2.4) -- (0.66,1.7);
\draw[dotted, <-] (-1.2,2.4) -- node[above] {$\mu$} (0.33,2.4) -- (0.33,1.7);


\end{tikzpicture}
\caption{Token passing moves $\rho,\lambda$ and $\mu$ for two stacks in series.}
\label{fig:rholambdamu}
\end{figure}

\begin{defn}[$D_{a,b}(u)$]
If $u$ is a word over an alphabet that includes the letters $a$ and $b$, 
define $D_{a,b}(u)$ to be the number of $a$ letters minus the number of $b$ letters contained in $u$.
\end{defn}

\begin{defn}[$\lan_{k,\infty}$]\label{defn:Lk-infty}
Let $k\in\mathbb N$. The language  $\lan_{k,\infty}$ is the set of words
 $w\in\{\rho,\lambda,\mu\}^*$ satisfying
\begin{enumerate}
\item $ \diffrl(u)\in [0,k]$ and $\difflu(u)\in [0,\infty)$ for all prefixes, $u$, of $w$, 
\item $\diffrl(w)=\difflu(w)=0$.
\end{enumerate}
\end{defn}

\begin{lem}\label{lem:L2inf}
A word  $w\in \{\rho,\lambda,\mu\}^*$ encodes a permutation in $\perms$ if and only if $w\in \lan_{2,\infty}$. Moreover, a word of length $3n$ in $\lan_{2,\infty}$ encodes a permutation of length $n$.
\end{lem}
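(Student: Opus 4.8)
The plan is to unwind the (essentially definitional) dictionary between words over $\{\rho,\lambda,\mu\}$ and computations of the machine of Figure~\ref{fig:rholambdamu}, and then observe that conditions (1) and (2) of Definition~\ref{defn:Lk-infty} (with $k=2$) say precisely that the computation is legal and terminates with both stacks emptied.

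First I would fix notation for a configuration of the machine: the ordered contents of the depth-$2$ stack $A$, the ordered contents of the infinite stack $B$, and the output word produced so far; the input is taken to be $12\cdots n$, where $n$ is the number of $\rho$'s in $w$, so that the $i$-th $\rho$ reads token $i$. Reading $w=w_1\cdots w_m$ left to right, $\rho$ moves the next input token onto $A$, $\lambda$ moves the top of $A$ onto $B$, and $\mu$ moves the top of $B$ to the output. The key bookkeeping fact, proved by a one-line induction on $|u|$, is that after reading a prefix $u$ of $w$ the height of $A$ equals $\diffrl(u)$, the height of $B$ equals $\difflu(u)$, and the number of tokens already output equals the number of $\mu$'s in $u$.

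Next I would identify the numerical constraints with legality and completeness. A $\rho$-move is legal exactly when $A$ holds at most $1$ token beforehand (room in a depth-$2$ stack), a $\lambda$-move exactly when $A$ is nonempty, and a $\mu$-move exactly when $B$ is nonempty. Via the bookkeeping fact, ``$\diffrl(u)\in[0,2]$ for every prefix $u$'' is exactly the statement that no $\rho$ overflows $A$ and no $\lambda$ pops an empty $A$, while ``$\difflu(u)\ge 0$ for every prefix'' is exactly the statement that no $\mu$ pops an empty $B$ (the bound $\infty$ being vacuous since $B$ is unbounded). A small induction is needed to see that these conditions are sufficient, not merely necessary --- i.e. that the reading never gets stuck: if the inequalities hold for the prefix ending at $w_i$, then $w_i$ can in fact be performed from the configuration reached after $w_1\cdots w_{i-1}$ (for instance $\diffrl\le 2$ after a $\rho$ forces $\diffrl\le 1$ before it, so $A$ had room). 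Finally, condition (2), $\diffrl(w)=\difflu(w)=0$, says both stacks are empty at the end; together with legality this forces the numbers of $\rho$'s, $\lambda$'s and $\mu$'s in $w$ to be equal, hence all equal to $n$, so every token read is eventually output and the run produces a genuine permutation of $\{1,\dots,n\}$ --- which, by the definition of $\perms$, belongs to $\perms$. The converse (a legal complete computation satisfies (1)--(2)) is immediate from the same bookkeeping, which establishes the ``if and only if''. For the ``moreover'' clause, a word of length $3n$ in $\lan_{2,\infty}$ has equal numbers of $\rho$'s, $\lambda$'s and $\mu$'s by (2), hence $n$ of each, so it reads in and outputs $n$ tokens and therefore encodes a permutation of length $n$.

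I expect no real obstacle here; the one point that deserves care is the sufficiency half of the legality equivalence --- arguing that the prefix inequalities actually guarantee each successive move can be carried out --- which is the short induction flagged above. Everything else is a matter of matching the conditions of Definition~\ref{defn:Lk-infty} to the rules of the machine.
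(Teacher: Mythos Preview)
Your proposal is correct and follows the same approach as the paper; the paper simply dismisses the first claim as ``clear from the definition'' and gives exactly your counting argument for the ``moreover'' clause. Your careful unpacking of the bookkeeping and the sufficiency direction is accurate but more detailed than the paper deems necessary.
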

\begin{proof}
The first claim is clear from the definition.
If $w\in \lan_{2,\infty}$ has $n$ $\rho$ letters, then  $\diffrl(w)=0$ implies $w$ has $n$ $\lambda$ letters, and $\difflu(w)=0$ then implies $w$ has $n$ $\mu$ letters, so the length of $w$ is $3n$.
\end{proof}

The language $ \lan_{2,\infty}$ consists of all possible ways to pass tokens through the system of stacks as in Figure~\ref{fig:rholambdamu}. We wish to find a sublanguage that is in bijection with $\perms$. From the set of all words in $ \lan_{2,\infty}$ that generate the same permutation, we will try to choose the string that outputs tokens as soon as possible, that is, has more $\mu$ letters closer to the front. The next definition will help to  formalise this.

\begin{defn}[$\mu$-ordering]
Define an ordering, $\prec_\mu$, on words in  $ \{\rho,\lambda,\mu\}^*$ as follows. Let $\theta: \{\rho,\lambda,\mu\}^*\ra \{\nu,\mu\}^*$ be a monoid homomorphism defined by $\theta(\mu)=\mu$ and $\theta(\rho)=\theta(\lambda)=\nu$.
 If $u\neq v$ as strings then $u\prec_\mu v$ if $|u|=|v|$ and $\theta(u)$ precedes $\theta(v)$ in  lexographic ordering on $\{\mu,\nu\}^*$ where $\mu<\nu$.
\end{defn}

For example, if $u=\rho\lambda\mu\rho\lambda\mu$ and     $v=\rho\lambda\rho\mu\lambda\mu$ then $u \prec_\mu  v$.
Note that both words generate the permutation $12$, and $u$ is obtained from $v$ by replacing the subword $\rho\mu$ by $\mu\rho$, which has no affect on the permutation being produced.
More generally we have the following.

\begin{lem}\label{lem:badsubwords}
Let $w\in \lan_{2,\infty}$.
\begin{enumerate}\item  If $w=w_0\rho\mu w_1$ then $w'=w_0\mu\rho w_1$ generates the same permutation as $w$, and $w'\prec_\mu w$.
\item  If   $w=w_0\rho\lambda w_1\lambda \mu w_2$ with  $\diffrl(w_0)=1$ and $w_1\in\lan_{1,\infty}$,
 then $w'=w_0\lambda\rho w_1\mu\lambda w_2$ generates the same permutation as $w$, and $w'\prec_\mu w$.
\item  If   $w= w_0\lambda\rho w_1\lambda \mu w_2$ with  $\diffrl(w_0)=1$ and $w_1\in\lan_{1,\infty}$,
 then $w'=w_0\rho\lambda w_1\mu\lambda w_2$ generates the same permutation as $w$, and $w'\prec_\mu w$.
 \end{enumerate}
 \end{lem}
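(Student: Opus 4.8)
The plan is to treat all three items as local rewriting rules and to verify, in each case, that replacing the indicated window of $w$ by the indicated window of $w'$ leaves the machine configuration --- the contents of the depth-two stack, the contents of the infinite stack, the prefix of $12\cdots n$ already read in, and the string produced so far --- unchanged from the instant the window closes. Granting this, both assertions follow at once: the part of $w'$ before the window is identical to that of $w$; the part after the window runs from exactly the configuration reached in $w$, hence is legal and emits the same symbols as in $w$; and $w'$ has the same multiset of letters as $w$, so $\diffrl(w')=\difflu(w')=0$ and condition (2) of Definition~\ref{defn:Lk-infty} holds automatically. Throughout I read the defining conditions of $\lan_{k,\infty}$ as statements about stack heights via Figure~\ref{fig:rholambdamu}: after reading a prefix $u$ the depth-two stack holds $\diffrl(u)$ tokens and the infinite stack holds $\difflu(u)$ tokens.

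Item (1) is immediate: a $\rho$ only moves the next input letter onto the depth-two stack and a $\mu$ only outputs the top of the infinite stack, so the two moves act on disjoint parts of the configuration and $\rho\mu$, $\mu\rho$ have the same effect from any starting configuration; and each of the two moves is legal precisely when, at the point just after $w_0$, the depth-two stack holds at most one token and the infinite stack is nonempty, both of which are forced by $w\in\lan_{2,\infty}$. For the $\prec_\mu$ claim, $\theta(w)$ and $\theta(w')$ agree up to the window and there read $\nu\mu$ versus $\mu\nu$; since $\mu<\nu$ this makes $\theta(w')$ lexicographically smaller, and $|w'|=|w|$, so $w'\prec_\mu w$. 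The identical $\prec_\mu$ argument applies to items (2) and (3), where $w'$ is again obtained by sliding the distinguished $\mu$ one place to the left past a $\rho$ or a $\lambda$, that is, past a $\nu$.

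The substance is in items (2) and (3), and the key auxiliary fact I would isolate first is the following oblivious-execution property: if $w_1\in\lan_{1,\infty}$ is executed from a configuration in which the depth-two stack holds a single token $a$ and the infinite stack is nonempty with top token $b$, then $a$ is never popped during $w_1$ (that would force the depth-two height to $0$, i.e. some prefix of $w_1$ to have $\diffrl=-1$, impossible), $b$ is never popped during $w_1$ (that would force some prefix of $w_1$ to have $\difflu=-1$), and since $\diffrl(w_1)=\difflu(w_1)=0$ the two stacks are restored to their initial contents once $w_1$ finishes; so, as seen from the two ends of its window, $w_1$ is oblivious to $a$ and $b$. Granting this, write $a$ for the unique token left on the depth-two stack by $w_0$ (unique because $\diffrl(w_0)=1$) and $b$ for the input letter read by the first $\rho$ of the window --- the same letter in $w$ and in $w'$, since the only $\rho$'s preceding it lie in $w_0$. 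Tabulating the configuration at the instant the window closes, one finds in item (2) that both $w$ and $w'$ leave the depth-two stack empty, the infinite stack carrying $b$ on top of its pre-window contents, $a$ just appended to the output, and the input advanced past $b$; and in item (3) that both leave the depth-two stack empty, the infinite stack carrying $a$ on top of its pre-window contents, $b$ just appended to the output, and the input advanced past $b$. Legality of the rewritten window is then routine height bookkeeping: the hypothesis $\diffrl(w_0)=1$ is exactly what makes the two rearranged moves at the front of the window legal, $w_1\in\lan_{1,\infty}$ keeps the depth-two height in $[1,2]$ during $w_1$, and the infinite stack never falls below its pre-window height $\difflu(w_0)\ge0$.

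The main obstacle, and the only place that needs genuine care, is this oblivious-execution fact together with getting the roles of $a$ and $b$ right: in item (2) the original window first parks $b$ on the infinite stack and only afterwards transfers $a$ there and outputs it, whereas the rewritten window transfers $a$ and outputs it first and parks $b$ afterwards --- these coincide because $a$ is ultimately output and $b$ is ultimately buried, so their order is immaterial, but confirming it means following both stacks simultaneously across the $w_1$ block. Item (3) is the same computation with the pre-window roles of $a$ and $b$ exchanged.
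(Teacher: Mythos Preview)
Your proof is correct and follows essentially the same approach as the paper's: both arguments track the machine configuration through the rewritten window and use the fact that $w_1\in\lan_{1,\infty}$ leaves the token in stack $A$ and the top token of stack $B$ untouched, so that the configurations at the end of the window agree and the suffix runs identically. Your treatment is slightly more systematic --- you isolate the ``oblivious-execution'' property explicitly and verify that $w'\in\lan_{2,\infty}$, which the paper leaves implicit --- but the underlying mechanism is the same.
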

\begin{proof}
In each case it is clear that $w'\prec_\mu w$.
We must show that in each case the two strings generate the same permutation. For case (1) this is clear since $\rho$ and $\mu$ do not interact.

For case (2),
 since  $\diffrl( w_0)=1$,
there must be one token (say $a$) left in the first stack after reading $w_0$, and since the next letter to be read is $\rho$, there must be one token (say $b$) ready to enter the first stack. See Figure~\ref{fig:replace}.

\begin{figure}
  \centering

\begin{tikzpicture}
\draw (-4,1) node {After $w_0$:};

\draw[very thick] (0,2) -- (0,-1) -- (1,-1) -- (1,2);
\draw (2.5,-1.5) node {$A$};
\draw[very thick] (2,2) -- (2,1) -- (3,1) -- (3,2);
\draw (0.5,-1.5) node {$B$};
\draw (2.5,1.35) node {$a$};
\draw (4.8,2.5) node {$b$ - - - -};
\draw (-1.6,2.5) node {- - - - };
\draw (0.5,-0.75) -- (0.5,-0.5);
\draw (0.5,-0.25) -- (0.5,0);
\end{tikzpicture}
  \\
  
\vspace{1cm}

  \begin{tikzpicture}
  \draw (-4,1) node {After $w_0\rho\lambda$:};
\draw[very thick] (0,2) -- (0,-1) -- (1,-1) -- (1,2);
\draw (2.5,-1.5) node {$A$};
\draw[very thick] (2,2) -- (2,1) -- (3,1) -- (3,2);
\draw (0.5,-1.5) node {$B$};
\draw (2.5,1.35) node {$a$};
\draw (4.8,2.5) node {- - - -};
\draw (-1.6,2.5) node { - - - -};
\draw (0.5,0.5) node {$b$};
\draw (0.5,-0.75) -- (0.5,-0.5);
\draw (0.5,-0.25) -- (0.5,0);
\end{tikzpicture}\\

\vspace{1cm}

    \begin{tikzpicture}
      \draw (-4,1) node {After $w_0\rho\lambda w_1\lambda\mu$:};
\draw[very thick] (0,2) -- (0,-1) -- (1,-1) -- (1,2);
\draw (2.5,-1.5) node {$A$};
\draw[very thick] (2,2) -- (2,1) -- (3,1) -- (3,2);
\draw (0.5,-1.5) node {$B$};

\draw (4.8,2.5) node {- - - -};
\draw (-1.6,2.5) node { - - - - $a$};
\draw (0.5,0.5) node {$b$};
\draw (0.5,-0.75) -- (0.5,-0.5);
\draw (0.5,-0.25) -- (0.5,0);
\end{tikzpicture}
  
    \caption{Stack configurations   in the proof of Lemma~\ref{lem:badsubwords}.}
  \label{fig:replace}
\end{figure}
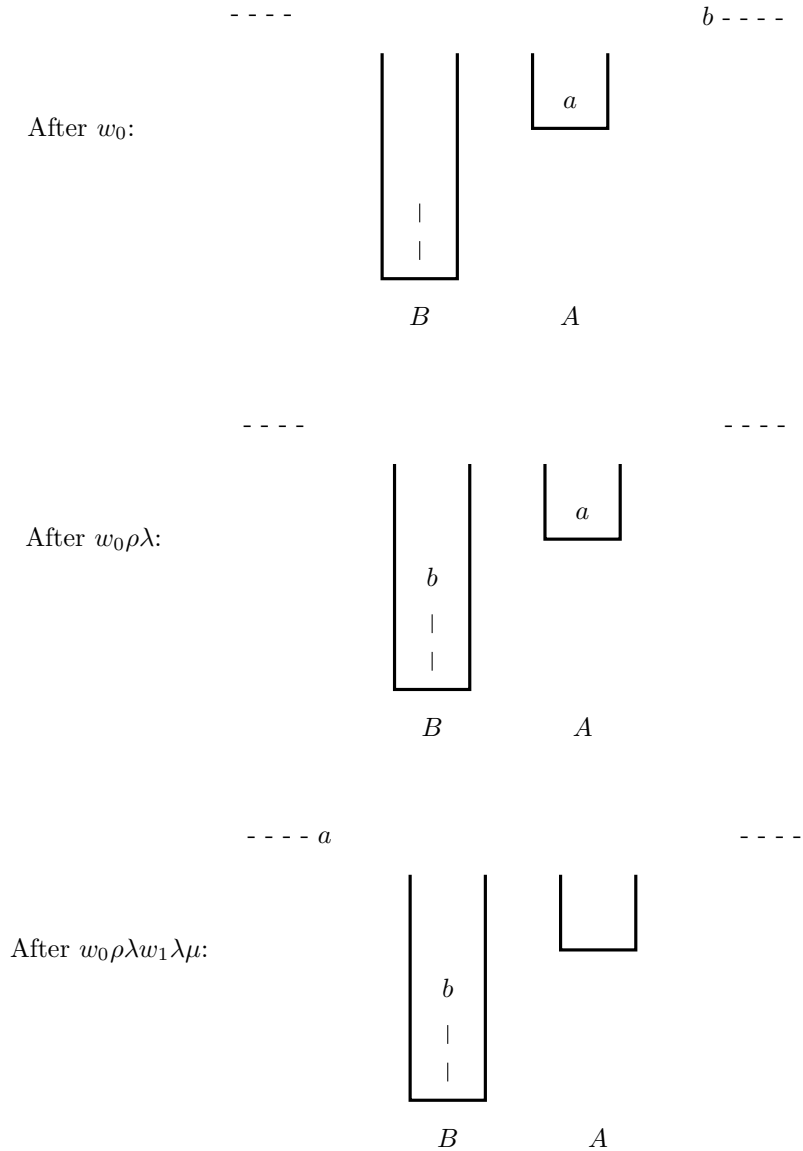

After reading $\rho\lambda$, $b$ moves to the top of stack $B$ and $a$ stays in stack $A$.
Reading $w_1$ leaves $a$ and $b$ in place and outputs some permutation of  input tokens after $b$.
Finally 
$\lambda\mu$ outputs $a$, leaving $b$ on the top of stack $B$ and stack $A$ empty.

Starting from the initial configuration in Figure~\ref{fig:replace}, the prefix $w_0\lambda\rho w_1\mu\lambda$ of $w'$ moves $a$ to the top of stack $B$ and places $b$ in stack $A$. The permutation generated by $w_1$ is then passed across as before, then $a$ is output, and finally $b$ is moved to stack $B$, leaving the stacks in the same configuration and the prefix of $w$.

A similar argument applies for Case (3) and is left to the reader.
\end{proof}

\begin{defn}[$\lan$]\label{DefL} The language $\lan$ is the set of words $w\in \lan_{2,\infty}$ that do not
\begin{enumerate}
\item contain $\rho\mu$,
\item  have a prefix $w_0\rho\lambda w_1\lambda \mu$ with $w_1\in\lan_{1,\infty}$ and $\diffrl(w_0)=1$,
\item  have a prefix $w_0\lambda\rho w_1\lambda \mu$ with $w_1\in\lan_{1,\infty}$ and $\diffrl(w_0)=1$.
\end{enumerate}
\end{defn}



\begin{lem}\label{lem:replaceKnuth}
Let $w\in \lan_{2,\infty}$. 
If either
\begin{enumerate}\item  $w=w_0 \rho\lambda w_1\lambda  w_2 \mu w_3$  with  $\diffrl(w_0)=1, w_1\in\lan_{1,\infty}$, and $w_2\in\lan_{2,\infty} $ generates a permutation that avoids $312$, or
\item   $w=w_0\lambda  \rho w_1\lambda  w_2 \mu w_3$  with  $\diffrl(w_0)=1, w_1\in\lan_{1,\infty}$, and $w_2\in\lan_{2,\infty} $ generates a permutation that avoids $312$, 
\end{enumerate}
then $w\not\in\lan$.
\end{lem}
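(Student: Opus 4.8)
The plan is to reduce Lemma~\ref{lem:replaceKnuth} to Lemma~\ref{lem:badsubwords} by arguing that whenever $w$ has one of the two forms described, the segment $w_2$ between the second $\lambda$ and the eventual $\mu$ can be ``slid past'' so as to expose one of the forbidden patterns (2) or (3) of Definition~\ref{DefL}. Concretely, in case (1) the token $a$ that is pushed onto stack $B$ by the initial $\rho\lambda$ (the token left in stack $A$ after $w_0$, using $\diffrl(w_0)=1$, exactly as in the proof of Lemma~\ref{lem:badsubwords} and Figure~\ref{fig:replace}) sits below everything produced by $w_1$ and then by $w_2$. The first $\lambda$ after $w_2$ and the subsequent $\mu$ are what output $a$; so I want to show that the hypothesis that the permutation generated by $w_2$ avoids $312$ lets us move the $\lambda\mu$ that outputs $a$ to immediately after $w_1$, i.e. rewrite $w$ to a word with prefix $w_0\rho\lambda w_1\lambda\mu w_2'$ (with $w_1\in\lan_{1,\infty}$, $\diffrl(w_0)=1$), which is forbidden form (2). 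Since such a rewriting does not change the generated permutation and the original $w$ generates the same permutation, $w$ itself cannot be in $\lan$ — because membership in $\lan$ is a property of the word, and I must instead argue that $w$ literally has a forbidden prefix, or that it is $\mu$-equivalent to a word with smaller $\prec_\mu$ value that it must equal if it were the chosen representative. Let me restate this more carefully below.

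First I would set up notation following Figure~\ref{fig:replace}: after reading $w_0\rho\lambda$ the token $b$ (next to enter) is on top of stack $B$, $a$ is in stack $A$, and stack $A$ has room for nothing more until $a$ leaves. Reading $w_1\in\lan_{1,\infty}$ leaves $a,b$ in place and empties whatever it touched, so after $w_1$ the configuration again has $a$ in stack $A$ and $b$ on top of $B$ (with the $w_1$-output already gone). Now $w_2\in\lan_{2,\infty}$ is processed; crucially $w_2$ begins from a configuration with $a$ trapped at the bottom of stack $A$, so the letters of $w_2$ only ever interact with stack $B$ and the region above $a$, and $w_2$ returns the machine to the same configuration (that is the content of $\lan_{2,\infty}$: the $D$-values return to $0$). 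The key claim is: \emph{if the permutation $\sigma$ output by $w_2$ (on the block of input tokens it consumes) avoids $312$, then the block of tokens consumed by $w_2$ can all be pushed through using only stack $B$} — equivalently, there is a word $w_2'\in\lan_{1,\infty}$ generating the same $\sigma$. This is exactly Knuth's theorem: a permutation is generated using a single (infinite) stack iff it avoids $312$. Since stack $A$ is blocked by $a$ during $w_2$, the effective machine available to $w_2$ \emph{is} a single infinite stack (stack $B$), so Knuth's result gives $w_2' \in \lan_{1,\infty}$ with the same output.

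Having produced $w_2'\in\lan_{1,\infty}$, I would form $w'' = w_0\,\rho\lambda\, (w_1 w_2')\, \lambda\mu\, w_3$. One checks $w_1 w_2' \in \lan_{1,\infty}$ (concatenation of $\lan_{1,\infty}$ words is again in $\lan_{1,\infty}$, since the $\diffrl$ value returns to $0$ after $w_1$ and after $w_1 w_2'$, and $\difflu$ stays nonnegative), and $\diffrl(w_0)=1$ still holds, so $w''$ has the forbidden prefix of Definition~\ref{DefL}(2); moreover $w''$ generates the same permutation as $w$. Finally, to conclude $w\notin\lan$ I would invoke the $\mu$-ordering machinery: among all words generating a given permutation, the intent of Definition~\ref{DefL} (established via Lemma~\ref{lem:badsubwords}) is that $\lan$ picks out $\prec_\mu$-minimal representatives and that any word containing the ``bad'' configurations is not minimal. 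I would check $w'' \prec_\mu w$ (the rewriting only moves a $\mu$ forward / replaces $\rho,\lambda$ blocks without moving any $\mu$ later — in fact replacing $w_2$ by $w_2'$ can only move $\mu$'s earlier, and the restructuring of the $\lambda\mu$ is $\prec_\mu$-neutral or decreasing), so $w$ is not $\prec_\mu$-minimal among words for its permutation; combined with Lemma~\ref{lem:badsubwords}, which shows every non-minimal word can be reduced through one of the forbidden patterns, this forces $w$ to contain a forbidden subword/prefix, hence $w\notin\lan$. Case (2) is handled identically, with the initial $\lambda\rho$ in place of $\rho\lambda$ and the roles of where $b$ lands swapped, exactly parallel to cases (2) and (3) of Lemma~\ref{lem:badsubwords}.

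The main obstacle I anticipate is the passage ``$w_2$ effectively has only stack $B$ available, hence Knuth applies.'' Making this rigorous requires showing that during the processing of $w_2$ the token $a$ genuinely never moves — i.e. no prefix of $w_2$ can trigger the $\lambda$ that would pop $a$, because such a $\lambda$ followed later by the $\mu$ popping $a$ would already realize forbidden prefix (2)/(3) inside $w_2$, contradicting nothing a priori but requiring care about \emph{which} $\lambda$ in the original $w$ is ``the one'' that outputs $a$. I would pin this down by tracking $\diffrl$: $a$ is the bottom token of stack $A$ from just after the first $\lambda$ until it is output, and it is output precisely at the first later moment when $\diffrl$ returns to $0$ and then a $\mu$ is read; by hypothesis that moment is the displayed ``$\lambda \ldots \mu$'' after $w_2$, and $\diffrl(w_2\text{-prefixes})\ge 1$ throughout when measured from after $w_0\rho\lambda$, which is exactly the statement that $a$ stays put. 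Once that bookkeeping is clean, the rest is routine.
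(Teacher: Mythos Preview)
Your argument has two genuine gaps.

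\textbf{First, you have misread the stack configuration during $w_2$.} The decomposition in the lemma is $w=w_0\,\rho\lambda\, w_1\,\lambda\, w_2\,\mu\, w_3$: there is a $\lambda$ between $w_1$ and $w_2$ which you have silently dropped. After $w_0\rho\lambda w_1$ you correctly have $a$ alone in stack~$A$ and $b$ on top of stack~$B$; but then this $\lambda$ moves $a$ out of stack~$A$ and onto stack~$B$. Thus when $w_2$ begins, stack~$A$ is \emph{empty} and $a$ sits on top of stack~$B$ (this is exactly the first configuration in Figure~\ref{fig:replaceKnuth}). Your central claim that ``stack~$A$ is blocked by $a$ during $w_2$, so the effective machine is a single infinite stack and Knuth applies'' is therefore false: $w_2\in\lan_{2,\infty}$ has both slots of stack~$A$ available and may freely use them. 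The $312$-avoidance hypothesis on $w_2$ cannot be discharged by saying only one stack is available; in the paper's proof it is used for the opposite purpose, namely to constrain how $w_2$ can begin and to force, after a case analysis on the first token $b$ that $w_2$ moves, a forbidden prefix to appear \emph{inside} $w$ itself.

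\textbf{Second, your concluding logic is circular.} Even if you could build some $w''\prec_\mu w$ generating the same permutation and having a forbidden prefix, this does not show $w\notin\lan$. Membership in $\lan$ is defined syntactically (Definition~\ref{DefL}): $w\notin\lan$ means $w$ itself contains $\rho\mu$ or has a bad prefix. Lemma~\ref{lem:badsubwords} goes only one way (bad form $\Rightarrow$ not $\prec_\mu$-minimal); the converse (not $\prec_\mu$-minimal $\Rightarrow$ bad form) is precisely what the injectivity half of Theorem~\ref{thm:bijection} establishes, and that proof \emph{uses} Lemma~\ref{lem:replaceKnuth}. So invoking minimality here would be circular. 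The paper instead argues directly: assuming $w\in\lan$, it takes a maximal such decomposition, analyses the first token $b$ handled by $w_2$, and in every case exhibits either a longer decomposition (contradicting maximality) or a literal forbidden prefix of $w$.
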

\begin{proof}
Suppose for contradiction that $w\in\lan$,   $w=w_0 v w_1\lambda  w_2 \mu w_3$  with $v\in\{\rho\lambda, \lambda\rho\}$,  $\diffrl(w_0)=1, w_1\in\lan_{1,\infty}, w_2$ generates a permutation that avoids $312$, and moreover that $w_0$ is the longest prefix of $w$ with this property. That is, if $w=u_0v u_1\lambda  u_2 \mu w_3$  with $v\in\{\rho\lambda, \lambda\rho\},  \diffrl(u_0)=1, u_1\in\lan_{1,\infty}$ and $u_2$ generates a permutation that avoids $312$, then $|u_0|\leq |w_0|$.



Since $\diffrl(w_0vw_1)=1$ and $\lambda$ moves a token from stack $A$ to stack $B$, 
after reading $w_0vw_1\lambda$ we have no tokens in stack $A$, and some  token, say $a$, in stack $B$.  See Figure~\ref{fig:replaceKnuth}.
\begin{figure}
  \centering

\begin{tikzpicture}
\draw (-4,1) node {After $w_0v\lambda$:};

\draw[very thick] (0,2) -- (0,-1) -- (1,-1) -- (1,2);
\draw (2.5,-1.5) node {$A$};
\draw[very thick] (2,2) -- (2,1) -- (3,1) -- (3,2);
\draw (0.5,-1.5) node {$B$};
\draw (0.5,0.5)  node {$a$};
\draw (4.8,2.5) node {$b$ - - - -};
\draw (-1.6,2.5) node {- - - - };
\draw (0.5,-0.75) -- (0.5,-0.5);
\draw (0.5,-0.25) -- (0.5,0);
\end{tikzpicture}
\\

\vspace{1cm}

\begin{tikzpicture}
\draw (-4,1) node {After $w_0v\lambda \rho_b$:};

\draw[very thick] (0,2) -- (0,-1) -- (1,-1) -- (1,2);
\draw (2.5,-1.5) node {$A$};
\draw[very thick] (2,2) -- (2,1) -- (3,1) -- (3,2);
\draw (0.5,-1.5) node {$B$};
\draw (0.5,0.5)  node {$a$};
\draw (2.5,1.35) node {$b$};
\draw (4.8,2.5) node {- - - -};
\draw (-1.6,2.5) node {- - - - };
\draw (0.5,-0.75) -- (0.5,-0.5);
\draw (0.5,-0.25) -- (0.5,0);
\end{tikzpicture}
\\

\vspace{1cm}

\begin{tikzpicture}
\draw (-4,1) node {\begin{tabular}{l}
After $w_0v\lambda  \rho_b s\lambda_b$\\
 if $\difflu(s)>0$:\end{tabular}};

\draw[very thick] (0,2) -- (0,-1) -- (1,-1) -- (1,2);
\draw (2.5,-1.5) node {$A$};
\draw[very thick] (2,2) -- (2,1) -- (3,1) -- (3,2);
\draw (0.5,-1.5) node {$B$};
\draw (0.5,0.5)  node {$a$};
\draw (0.5,1.5)  node {$c$};
\draw (0.5,2)  node {$b$};
\draw (4.8,2.5) node { - - - -};
\draw (-1.6,2.5) node {- - - - };
\draw (0.5,-0.75) -- (0.5,-0.5);
\draw (0.5,-0.25) -- (0.5,0);

\draw (0.5,0.85) -- (0.5,1.1);
\end{tikzpicture}

    \caption{Stack configurations   in the proof of Lemma~\ref{lem:replaceKnuth}.}
  \label{fig:replaceKnuth}
\end{figure}
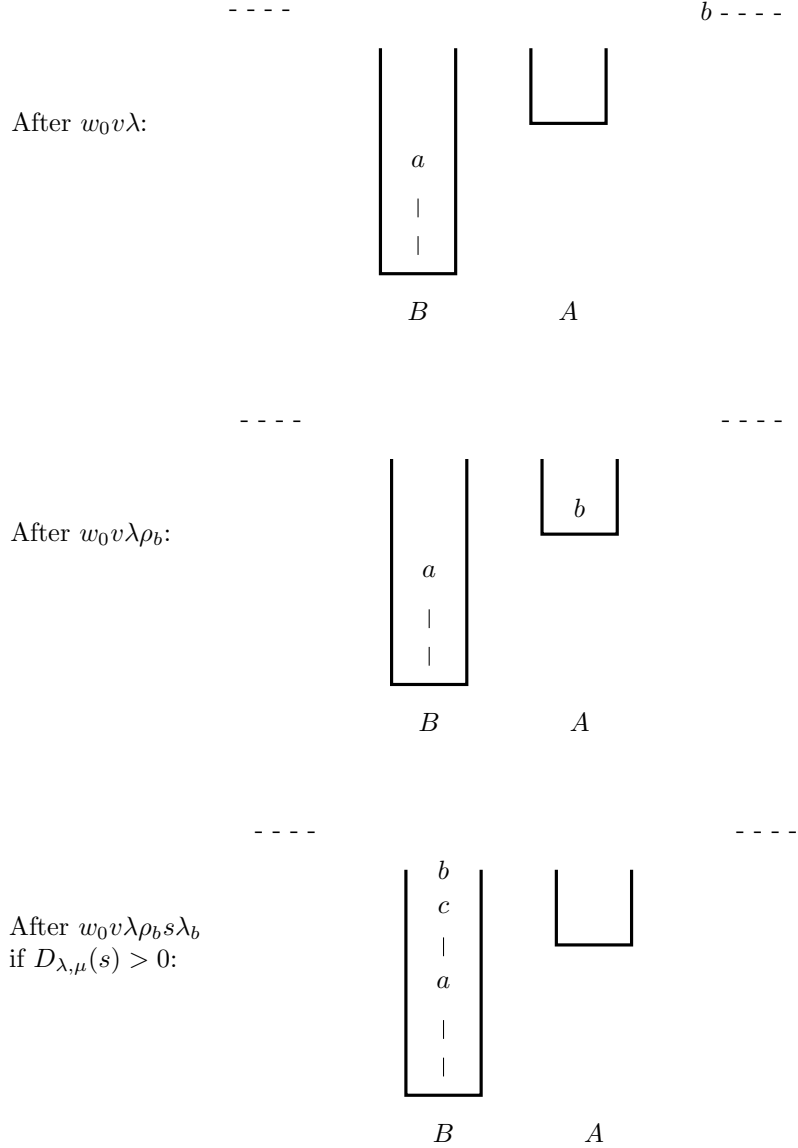

Since $w\in\lan$, $w_2$ cannot be empty, 
and since $w_2$ is a subword of $w\in\lan$ we have $w_2\in\lan$.
So $w_2$ moves some sequence of tokens completely through the stacks, leaving $a$ in place. 
The first letter of $w_2$ must be $\rho$, which moves some token, say $b$, onto stack $A$. Let $\rho_b,\lambda_b,\mu_b$ be the letters in $w_2$ that correspond to moving  $b$ through the stacks. Then $w_2$ has prefix $\rho_b s\lambda_bt\mu_b$ where $s,t$ are subwords.

Since stack $A$ contains $b$ while $s$ is read, if $\rho$ occurs in $s$ it must be immediately followed by  $\lambda$, so $\diffrl(u)\in[0,1]$ for all prefixes $u$ of $s$, and $\diffrl(s)=0$. Further, if  $\difflu(u)<0$ for any prefix $u$ of $s$, then  $a$ would be output. 
Either $\difflu(s)=0$ (and $s\in \lan$) or $\difflu(s)>0$.

If $s\in\lan_{1,\infty}$ then $t\in\lan_{2,\infty}$ and generates a permutation avoiding $312$ since it is a subword of $w_2$.
In this case
 $w$ has prefix $w=w_0 v w_1\lambda  \rho_b s\lambda_b t \mu_b$
with $\difflu(w_0 v w_1)=1$ and $t$ generating a permutation avoiding $312$, which contradicts the choice of $w_0$ as the longest such prefix.

Therefore we must have 
$\difflu(s)>0$. In this case, 
 after reading $s$
 at least one token, say $c$, remains on top of $a$ in stack $B$ when $b$ is moved into it. After reading $\lambda_b$, the stack configuration is as in the third diagram shown in Figure~\ref{fig:replaceKnuth}.

Note that $a<b<c$ since they are input in this order.
If $t\neq\e$ then it must contain at least one $\mu$ (it cannot leave a token covering $b$, and cannot just be $\rho$ or $\rho\rho$) so it moves a token $d>c$ to the output. This means $w_2$ generates the subpermutation $dbc$ which is order equivalent to $312$, contradicting our assumption.
Thus $t=\e$ and  $w_2$ has prefix $\rho_b s\lambda_b\mu_b$, with $s\in\{\rho\lambda, \mu\}^*$. 
Either $s$ ends with $\rho\lambda$, or $s=u\rho\lambda s'$ where $\difflu(u)=\difflu(s)$ since $\difflu$ starts at zero and increases to this value.
Thus $s'\in\lan$, and $w=w_0 v w_1\lambda  \rho_b u\rho\lambda s'  \lambda_b\mu_b$ with $\diffrl(w_0 v w_1\lambda  \rho_b u)=1$, which contradicts $w\in\lan$.
\end{proof}

\begin{thm}\label{thm:bijection}
There is a bijection between permutations in $\perms$ of length $n$ and words in $\lan$ of length $3n$.
\end{thm}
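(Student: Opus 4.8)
The plan is to show that $\lan$ contains exactly one word of each length $3n$ for every permutation in $\perms$ of length $n$, which by Lemma~\ref{lem:L2inf} reduces to two things: (existence) for each $p\in\perms$ some word generating $p$ lies in $\lan$, and (uniqueness) no two distinct words in $\lan$ generate the same $p$. Existence is the easy half: given $p\in\perms$, among all words in $\lan_{2,\infty}$ generating $p$ pick the $\prec_\mu$-minimal one, call it $w$. If $w$ violated one of the three conditions in Definition~\ref{DefL}, then Lemma~\ref{lem:badsubwords} would produce a strictly $\prec_\mu$-smaller word generating the same $p$ --- note condition (2) and (3) of Definition~\ref{DefL} involve a \emph{prefix} $w_0\rho\lambda w_1\lambda\mu$, whereas Lemma~\ref{lem:badsubwords}(2),(3) rewrite an occurrence $w_0\rho\lambda w_1\lambda\mu w_2$; I would observe that a bad prefix is just the special case where what follows, $w_2$, is arbitrary, so the rewrite still applies. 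Hence $w\in\lan$, giving at least one word per permutation.

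The substance is uniqueness: I want to argue that $\lan$ is exactly a set of canonical representatives, i.e.\ that any $w\in\lan$ is the $\prec_\mu$-minimal word in $\lan_{2,\infty}$ generating its permutation. Equivalently, if $w\in\lan$ and $w'\prec_\mu w$ generates the same permutation, I must derive a contradiction. The natural route is a local-move / confluence argument: first establish that any two words in $\lan_{2,\infty}$ generating the same permutation are connected by a sequence of elementary ``commutation'' moves that do not change the permutation --- the obvious candidates being $\rho\mu\leftrightarrow\mu\rho$ (tokens in different stacks commute), the block swaps of Lemma~\ref{lem:badsubwords}(2),(3), and a Knuth-type rearrangement of a $312$-avoiding segment as isolated in Lemma~\ref{lem:replaceKnuth}. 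Then show that if $w$ is not $\prec_\mu$-minimal, one of these moves can be applied to strictly decrease $w$ in $\prec_\mu$ while staying within $\lan_{2,\infty}$; and crucially, by Lemmas~\ref{lem:badsubwords} and \ref{lem:replaceKnuth}, the presence of a spot where such a decreasing move applies forces $w\notin\lan$ --- Lemma~\ref{lem:replaceKnuth} is precisely the tool that handles the case where the offending configuration is ``hidden'' behind a $312$-avoiding factor, i.e.\ where the $\mu$ one would like to pull forward is separated from the $\rho\lambda$ (or $\lambda\rho$) by a nontrivial $\lan_{2,\infty}$ segment. Combining: $w\in\lan$ implies no $\prec_\mu$-decreasing move is available, so $w$ is $\prec_\mu$-minimal, so $\lan$ has at most one word per permutation.

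Concretely I would structure the uniqueness proof as follows. Fix $w\in\lan$ of length $3n$ generating $p$, and let $w^\ast$ be the $\prec_\mu$-minimal element of $\lan_{2,\infty}$ generating $p$; by existence, $w^\ast\in\lan$. Suppose $w\ne w^\ast$; then $w^\ast\prec_\mu w$, so reading both words left to right there is a first position $i$ at which $\theta(w)$ and $\theta(w^\ast)$ differ, and since $w^\ast$ is smaller, $w$ has a $\rho$ or $\lambda$ at position $i$ where $w^\ast$ has a $\mu$. The token-passing interpretation then says: in $w$, the token output at $w^\ast$'s position $i$ is instead output later, so there is some $\mu_b$ in $w$ that ``could have been moved earlier.'' Tracing the moves of token $b$ and the tokens sitting above it, I would show the relevant factor of $w$ matches one of the three forbidden shapes of Definition~\ref{DefL}, invoking Lemma~\ref{lem:replaceKnuth} exactly when the blocking segment between the critical $\rho\lambda$/$\lambda\rho$ and the $\lambda\mu$ is a $312$-avoiding $\lan_{2,\infty}$ word (which it must be, since what a depth-into-$312$-avoider stack can output is Knuth's class). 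This contradicts $w\in\lan$, so $w=w^\ast$, proving uniqueness, and the theorem follows.

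I expect the main obstacle to be the bookkeeping in the last step: making precise ``the relevant factor of $w$ matches a forbidden shape.'' One has to carefully follow where token $b$ and its covering tokens go, split on whether the segment read while $b$ sits in stack $A$ keeps stack $B$'s count positive (the $\difflu(s)>0$ versus $\difflu(s)=0$ dichotomy that already appears in the proof of Lemma~\ref{lem:replaceKnuth}), and in the positive case recurse or repackage so that Lemma~\ref{lem:replaceKnuth}(1) or (2) applies with the correct $w_0,w_1,w_2$. The potential pitfall is an off-by-one in the $\diffrl(w_0)=1$ conditions or mishandling the boundary cases where a segment is empty; these are exactly the kinds of details Lemma~\ref{lem:badsubwords} and Lemma~\ref{lem:replaceKnuth} were set up to absorb, so the proof should reduce to citing them once the right decomposition of $w$ is identified, but identifying that decomposition uniformly is the delicate part.
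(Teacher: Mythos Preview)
Your surjectivity argument matches the paper's. The gap is in injectivity. You assume that if $w\in\lan$ differs from a $\prec_\mu$-minimal word $w^\ast$ then $w^\ast\prec_\mu w$, but $\prec_\mu$ is not a total order: it compares only $\theta$-images, so if $\theta(w)=\theta(w^\ast)$ while $w\ne w^\ast$ --- one has $\rho$ where the other has $\lambda$ --- the two words are $\prec_\mu$-incomparable and your argument says nothing. This is not a boundary case to be cleaned up later; it is the heart of the matter. The paper's proof compares two arbitrary $u,v\in\lan$ at the first position where the \emph{strings themselves} differ, quickly rules out the possibility that either letter there is $\mu$, and then spends almost all of its effort on the remaining case $u_k=\rho$, $v_k=\lambda$, splitting further on whether token $a$ or token $b$ is output first and invoking Lemma~\ref{lem:replaceKnuth} in each subcase to exhibit a forbidden prefix in $u$ or in $v$.

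There is a second, smaller issue even in the case you do address. You look at the first position where $\theta(w)$ and $\theta(w^\ast)$ differ, but before that position $w$ and $w^\ast$ need only agree in their $\theta$-images, not as strings; their stack configurations at step $i$ may therefore already differ, which undermines the token-tracing you propose (``the token output at $w^\ast$'s position $i$'' need not sit in the same place in $w$'s configuration). The paper avoids this by comparing strings directly, so that the common prefix $z$ guarantees identical configurations at the branch point. Reframing your argument to compare two words of $\lan$ at their first \emph{string} difference, rather than routing through a $\prec_\mu$-minimal element, would align you with the paper's approach and close both gaps --- but you would then still owe the Case~1/Case~2 analysis, which is where the real work and the appeal to Lemma~\ref{lem:replaceKnuth} live.
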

\begin{proof}
Consider 
the map that sends a word of length $3n$ in $\lan\subseteq \lan_{2,\infty}$ to the  permutation of length $n$ it generates.
If $\sigma\in\perms$ then there is some word $w\in \lan_{2,\infty}$ that generates it by Lemma~\ref{lem:L2inf}.
If $w\notin\lan$, then $w$ must  either  contain $\rho\mu$, or have prefix $w_0\rho\lambda w_1\lambda\mu$ or $w_0\lambda\rho w_1\lambda\mu$ with $\diffrl(w_0)=1$ and $w_1\in\lan_{1,\infty}$. We rewrite $w$ as follows.

While $w$ contains $\rho\mu$ or has prefix $w_0\rho\lambda w_1\lambda\mu$ or $w_0\lambda\rho w_1\lambda\mu$:
\begin{enumerate}
\item[1.] Replace $\rho\mu$ with $\mu\rho$
\item[2.] Replace $w_0\rho\lambda w_1\lambda \mu$ with $w_0\lambda\rho w_1\mu \lambda$
\item[3.] Replace $w_0\lambda\rho w_1\lambda \mu$ with $w_0\rho\lambda w_1\mu \lambda$
\end{enumerate}
Each iteration replaces the current word by a word which generates the same permutation and is shorter in the $\mu$-ordering by Lemma~\ref{lem:badsubwords}, so the procedure  must terminate (there are finitely many words less than $w$ in the $\mu$-ordering).
It follows that the map is surjective.  We complete the proof by showing it is injective.


Suppose we have two words $u,v\in\lan$ that generate the same permutation, and that $u\neq v$ as strings. Write \[u=u_1u_2\dots u_n \ \mathrm{and} \ v=v_1v_2\dots v_n\] where $u_i,v_i\in\{\rho,\lambda,\mu\}$.

Since $u,v\in\lan$ we have $u_1=v_1=\rho$.
Let $k\in [2,n]$ be such that $u_i=v_i$ for $i<k$ and $u_k\neq v_k$.
Let $z=u_1\dots u_{k-1}=v_1\dots v_{k-1}$, so \[u=zu_k\dots u_n \ \mathrm{and} \ v=zv_k\dots v_n.\]

First consider the case that one of $u_k,v_k$ is $\mu$. Without loss of generality assume $u=z\mu u_{k+1}\dots u_n$. Then $z$ must leave some token, say $a$, at the top of stack $B$, and $u_k=\mu$ outputs this token.

If $v_k=\lambda$, then  $a$ will be covered and $v$ will not be able to generate the same permutation. So we must have $v_{k}=\rho$. Then $v_{k+1}\neq \mu$. If $v_{k+1}=\lambda$ then $a$ is covered. So $v_{k+1}=\rho$.  Then  $v_{k+2}\neq \mu$, if $v_{k+2}=\lambda$ then $a$ is covered, and $v+{k+2}\neq \rho$ since stack $A$ contains two tokens. So we have a contradiction, 
and it follows that neither $u_k,v_k$ can be $\mu$.

Without loss of generality assume $u_k=\rho$ and $v_k=\lambda$.
Then $z$ must leave at least one token in stack $A$ to be followed by $\lambda$, and at most one token to be followed by $\rho$. Let $a$ be the token in $A$, and $b$ the token moved from the input by $u_k=\rho$. See Figure~\ref{fig:thm_I}. Note that we have $\diffrl(z)=1$.

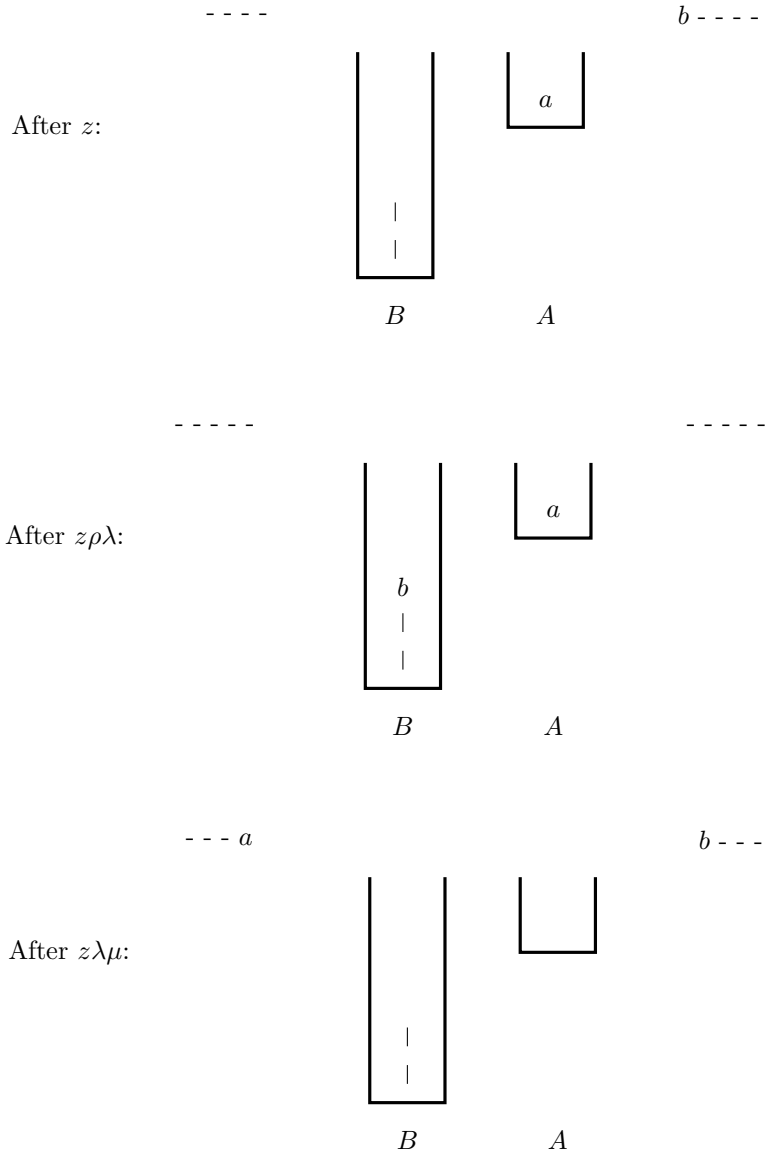
\begin{figure}[h!]
\begin{center}

\begin{tikzpicture}
\draw (-4,1) node 
{\begin{tabular}{l}
After $z$:\end{tabular}};

\draw[very thick] (0,2) -- (0,-1) -- (1,-1) -- (1,2);
\draw (2.5,-1.5) node {$A$};
\draw[very thick] (2,2) -- (2,1) -- (3,1) -- (3,2);
\draw (0.5,-1.5) node {$B$};
\draw (2.5,1.35) node {$a$};
\draw (4.8,2.5) node {$b$ - - - -};
\draw (-1.6,2.5) node {- - - - };
\draw (0.5,-0.75) -- (0.5,-0.5);
\draw (0.5,-0.25) -- (0.5,0);
\end{tikzpicture}
\\

\vspace{1cm}

\begin{tikzpicture}
\draw (-4,1) node 
{\begin{tabular}{l}
After $z\rho\lambda$:\end{tabular}};

\draw[very thick] (0,2) -- (0,-1) -- (1,-1) -- (1,2);
\draw (2.5,-1.5) node {$A$};
\draw[very thick] (2,2) -- (2,1) -- (3,1) -- (3,2);
\draw (0.5,-1.5) node {$B$};
\draw (4.8,2.5) node {- - - - -};
\draw (-2,2.5) node {- - - - -};
\draw (0.5,-0.75) -- (0.5,-0.5);
\draw (0.5,-0.25) -- (0.5,0);
\draw (2.5,1.35) node {$a$};
\draw (0.5,0.35) node{$b$};
\end{tikzpicture}
\\

\vspace{1cm}

\begin{tikzpicture}

\draw (-4,1) node 
{\begin{tabular}{l}
After $z\lambda\mu$:\end{tabular}};

\draw[very thick] (0,2) -- (0,-1) -- (1,-1) -- (1,2);
\draw (2.5,-1.5) node {$A$};
\draw[very thick] (2,2) -- (2,1) -- (3,1) -- (3,2);
\draw (0.5,-1.5) node {$B$};
\draw (4.8,2.5) node {$b$  - - -};
\draw (-2,2.5) node {- - - $a$};
\draw (0.5,-0.75) -- (0.5,-0.5);
\draw (0.5,-0.25) -- (0.5,0);
\end{tikzpicture}

\caption{\label{fig:thm_I} Stack configurations in Theorem~\ref{thm:bijection} where $u_k=\rho$ and $v_k=\lambda$.}
\end{center}
\end{figure}

In $u$, $z\rho$ must be followed by $\lambda$ since stack $A$ is full after the $\rho$ and $\rho$ cannot be followed by a $\mu$. So $u$ has prefix $z\rho\lambda$ and we have the configuration shown in the second diagram in Figure~\ref{fig:thm_I}.

In $v$, $z\lambda$ can be followed by either $\mu$ or $\rho$ but not $\lambda$ since stack $A$ is empty after  $v_k=\lambda$. 
Suppose $v_{k+1}=\mu$. Then after reading $z\lambda\mu$  we have the configuration shown in the third diagram in  Figure~\ref{fig:thm_I}.
Since $u$ and $v$ are assumed to produce the same permutation, the next $\mu$ letter appearing in $u$ after the prefix $z\rho\lambda$ must move $a$ to the output. Let $\lambda_a,\mu_a$ be the letters in $u$ that move the token $a$.
Then $u=z\rho\lambda u_1 \lambda_a u_2 \mu$ where $u_1,u_2\in\{\rho,\lambda\}^*$.
The subword $u_2$ cannot move tokens to cover $a$ in stack $B$, so cannot contain any $\lambda$ letters, and cannot contain any $\rho$ letters since it is followed by $\mu$, so it must be empty.
The subword $u_1$ must be of the form $(\rho\lambda)^i$ for $i\geq 0$, since it cannot move $a$. Then $u=z(\rho\lambda)^i\rho\lambda\lambda_a\mu_a$ with $\diffrl(z(\rho\lambda)^i)=1$, so $u\not\in\lan$.

It follows that $v_{k+1}=\rho$, so we have
\[u=z\rho\lambda \dots u_n, v=z\lambda\rho\dots v_n.\]
The two configurations of the stacks after reading the length $k+1$ prefixes of $u$ and $v$ respectively are shown in Figure~\ref{fig:cases}.

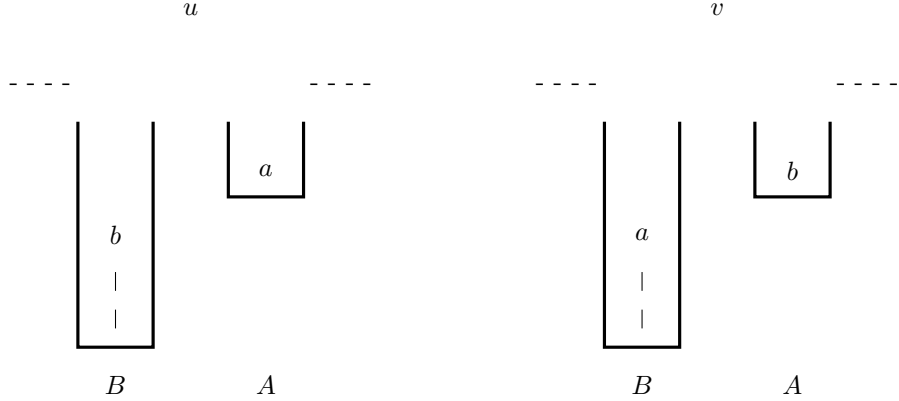
\begin{figure}[h!]
\begin{center}
\begin{tikzpicture}

\draw (-3.5,3.5) node {$u$};
\draw (3.5,3.5) node {$v$};
\draw[very thick] (-5,2) -- (-5,-1) -- (-4,-1) -- (-4,2);
\draw[very thick] (-3,2) -- (-3,1) -- (-2,1) -- (-2,2);
\draw[very thick] (2,2) -- (2,-1) -- (3,-1) -- (3,2);
\draw[very thick] (4,2) -- (4,1) -- (5,1) -- (5,2);

\draw (-4.5,-1.5) node {$B$};
\draw (-2.5,-1.5) node {$A$};
\draw (2.5,-1.5) node {$B$};
\draw (4.5,-1.5) node {$A$};

\draw (-5.5,2.5) node {- - - -};
\draw (-1.5,2.5) node {- - - -};
\draw (1.5,2.5) node {- - - -};
\draw (5.5,2.5) node {- - - -};

\draw (-4.5,-0.75) -- (-4.5,-0.5);
\draw (-4.5,-0.25) -- (-4.5,0);
\draw (-4.5, 0.5) node {$b$};
\draw (-2.5, 1.35) node {$a$};
\draw (2.5,-0.75) -- (2.5,-0.5);
\draw (2.5,-0.25) -- (2.5,0);
\draw (2.5, 0.5) node {$a$};
\draw (4.5, 1.35) node {$b$};
\end{tikzpicture}
\caption{\label{fig:cases}Stack configurations after $z\rho\lambda$ and $z\lambda\rho$  in Theorem~\ref{thm:bijection}.}
\end{center}
\end{figure}

We now consider two possibilities: either $a$ precedes $b$ in the permutation generated by $u$ and $v$, or $b$ precedes $a$.

\noindent \textbf{Case 1: $a$ precedes $b$}

 Mark the letters $\rho,\lambda,\mu$ in $u$ and $v$ that correspond to moving the token $a$, by appending the  subscript $a$.
So we have $u=z\rho\lambda w_1\lambda_a w_2\mu_a\dots u_n$ and $v=z\lambda_a\rho w \mu_a\dots v_n$ where $w,w_1, w_2\in\{\rho,\lambda,\mu\}^*$. 
 
 First consider the word $v$.
 Since $b$ must remain in stack $A$ until $a$ is output,   $w$ cannot end with $\rho$ and $w$ cannot leave any tokens covering $a$ in stack $B$, we have
$w\in\lan_{1,\infty}$. If $w$ is empty then $v$ contains $\rho\mu_a$ which means $v\not\in\lan$.
Thus $w$ is nonempty, so moves some tokens, say $t_1,\dots, t_s$, from the input to the output.

Since $u$ generates the same permutation as $v$, it must also move the tokens $t_1,\dots, t_s$ through the stacks and output them before $a$ is output. The subword $w_1$ cannot leave any tokens covering $a$ in stack $A$, so $w_1\in\{\rho\lambda, \mu\}^*$.


If $w_1$ leaves some tokens in stack $B$,
then these tokens must come after $t_s$ in the input, and so $w_1$ must feed all the tokens $t_1,\dots, t_s$ into the input, so $w_2$ cannot output any tokens, so cannot contain $\mu$, and cannot contain $\lambda$ since $a$ would be covered in stack $B$, and cannot be $\rho$ or $\rho\rho$ since it is followed by $\mu_a$, so $w_2$ is empty.
If $w_1$ ends with $\rho\lambda$, then write $w_1=p\rho\lambda$, and $z\rho\lambda w_1\lambda_a\mu_a=z\rho\lambda p \rho\lambda\lambda_a\mu_a$ with $\diffrl(z\rho\lambda p)=1$, so $u\not\in \lan$.
Otherwise $w_1$ ends in $\mu$. Since $w_1$ has more $(\rho\lambda)$ subwords than $\mu$ letters (it leaves tokens in stack $B$) then $w_1$ has some suffix $y\in\lan_{1,\infty}$ and prefix $p$ such that  $z=p\rho\lambda y$. So  we have  $z\rho\lambda w_1\lambda_a\mu_a=z\rho\lambda p \rho\lambda y \lambda_a\mu_a$ with $\diffrl(z\rho\lambda p)=1$ and $y\in\lan_{1,\infty}$ so $u\not\in \lan$.

Thus $w_1$ does not leave any tokens in stack $B$, so $w_1\in\lan_{1,\infty}$.  Let $t_1,\dots t_r$ with $r\leq s$ be the tokens moved to the output by $w_1$. 
The situtation is shown in Figure~\ref{fig:beforew2case1}. 

\begin{figure}[h!]
\begin{center}
\begin{tikzpicture}
\draw[very thick] (0,2) -- (0,-1) -- (1,-1) -- (1,2);
\draw (2.5,-1.5) node {$A$};
\draw[very thick] (2,2) -- (2,1) -- (3,1) -- (3,2);
\draw (0.5,-1.5) node {$B$};
\draw (4.5,2.5) node {$t_{r+1}\  \cdots\ t_s$ - -};
\draw (-1.5,2.5) node {- - - - -};
\draw (0.5,-0.75) -- (0.5,-0.5);
\draw (0.5,-0.25) -- (0.5,0);
\draw (0.5,0.75) node {$a$};
\draw (0.5,0.3) node{$b$};
\end{tikzpicture}
\caption{Stack configuration after $z\rho\lambda w_1\lambda_a$ in Case 1   in Theorem~\ref{thm:bijection}. }
\label{fig:beforew2case1}
\end{center}
\end{figure}
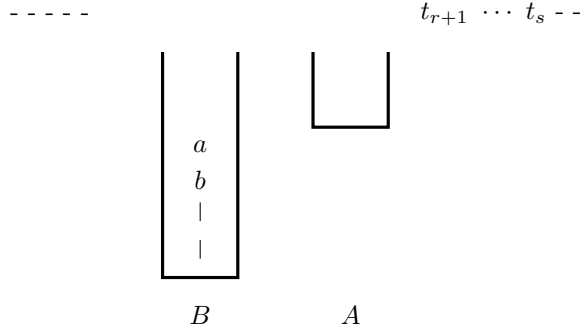

If $w_2$ is empty then $u$ has prefix $z\rho\lambda w_1\lambda_a\mu_a$ with $w_1\in\lan_{1,\infty}$ which is forbidden, so $w_2$ must move some tokens.
 The subword $w_2$ cannot leave any tokens in stack $B$.
Either $w_2$ leaves some tokens in stack $A$, or not.

 If $w_2$ leaves a token in stack $A$, this token cannot be one of $t_{r+1},\dots, t_s$ or else $v$ would generate a different permutation to $u$.
Therefore this token is moved into stack $A$ after $t_r$ by a letter $\rho$. This letter cannot be followed by $\mu$, and since it remains in stack $A$ it is not followed by $\lambda$. So this letter is either the last letter of $w_2$, or is followed by another $\rho$, which must also remain in stack $A$. Thus $w_2$ ends with $\rho$, but this is a contradiction since $w_2$ is followed by $\mu_a$.

 Thus $w_2$ does not leave any tokens in stacks $A$ or $B$, so  moves $t_{r+1},\dots, t_s$ from the input to the output, and $w_2\in\lan_{2,\infty}$. Note that $w_1w_2$ produces the same permutation of $t_1,\dots, t_s$ as $w$ does, and $w\in\lan_{1,\infty}$ so generates a $312$-avoiding permutation of $t_1,\dots,t_s$. The subword $w_1$ permutes the first $r$ tokens, and so $w_2$ must produce a permutation of $t_{r+1},\dots, t_s$ that avoids $312$. In this case $u$ has  prefix $z\rho\lambda w_1\lambda_a w_2 \mu_a$ where $\diffrl(z\rho\lambda)=1$, $w_1\in \lan_{1,\infty}$ and $w_2$ generates a $312$-avoider, so by Lemma~\ref{lem:replaceKnuth} $u$ must also contain a prefix that is not allowed if $u\in \lan$. This is a contradiction, so this case does not apply.


\noindent \textbf{Case  2: $b$ precedes $a$}

We return to the situation shown in Figure~\ref{fig:cases} with
 $u=z\rho\lambda\dots u_n$ and $v=z\lambda\rho\dots v_n$. Mark the letters $\rho,\lambda,\mu$ in $u$ and $v$ that correspond to moving the token $b$, by appending a subscript. 
 Then $u=z\rho_b\lambda_b w\mu_b\dots u_n$ and $v=z\lambda\rho_b w_1\lambda_b w_2\mu_b\dots v_n$ where $w,w_1, w_2\in\{\rho,\lambda,\mu\}^*$.
 
 First consider the word $u$.
 Since $a$ must remain in stack $A$ until $b$ is output,   $w$ cannot end with $\rho$ and $w$ cannot leave any tokens covering $b$ in stack $B$, we have
$w\in\lan_{1,\infty}$. If $w$ is empty then $u$ contains $\rho\mu_b$ which is forbidden, so $w$ moves some tokens, say $t_1,\dots, t_s$, from the input to the output.

Since $v$ generates the same permutation as $u$, it must also move the tokens $t_1,\dots, t_s$ through the stacks and output them before $b$ is output. 
The subword $w_1$ cannot leave any tokens covering $b$ in stack $A$, so $w_1\in\{\rho\lambda, \mu\}^*$.

If $w_1$ leaves some tokens in stack $B$,
then these tokens must appear after $t_s$ in the input, and so $w_1$ must feed the tokens $t_1,\dots, t_s$ into the input, so $w_2$ is empty (it cannot contain $\mu,\lambda$ and cannot end in $\rho$).
If $w_1$ ends with $\rho\lambda$, then write $w_1=p\rho\lambda$, and $z\lambda\rho_b w_1\lambda_b\mu_b=z\lambda\rho_b p \rho\lambda\lambda_b\mu_b$ with $\diffrl(z\lambda\rho_b p)=1$, so $v\not\in \lan$.
Otherwise $w_1$ ends in $\mu$. Since $w_1$ has more $(\rho\lambda)$ subwords than $\mu$ letters (it leaves tokens in stack $B$) then $w_1$ has some suffix $y\in\lan_{1,\infty}$ with $z=p\rho\lambda y$. So  we have  $z\lambda\rho_b w_1\lambda_b\mu_b=z\lambda\rho_b p \rho\lambda y \lambda_b\mu_b$ with $\diffrl(z\lambda \rho_b p)=1$ and $y\in\lan_{1,\infty}$ so $v\not\in \lan$.

Thus $w_1$ does not leave any tokens in stack $B$, so $w_1\in\lan_{1,\infty}$. 
Let $t_1,\dots t_r$ with $r\leq s$ be the tokens moved to the output by $w_1$. 
The situtation is shown in Figure~\ref{fig:beforew2}.

\begin{figure}[h!]
\begin{center}
\begin{tikzpicture}
\draw[very thick] (0,2) -- (0,-1) -- (1,-1) -- (1,2);
\draw (2.5,-1.5) node {$A$};
\draw[very thick] (2,2) -- (2,1) -- (3,1) -- (3,2);
\draw (0.5,-1.5) node {$B$};
\draw (4.5,2.5) node {$t_{r+1}\ t_2\ \cdots\ t_s$ - -};
\draw (-1.5,2.5) node {- - - - -};
\draw (0.5,-0.75) -- (0.5,-0.5);
\draw (0.5,-0.25) -- (0.5,0);
\draw (0.5,0.75) node {$b$};
\draw (0.5,0.3) node{$a$};
\end{tikzpicture}
\caption{Stack configuration after $z\lambda\rho_b w_1\lambda_b$ in Case 2   in Theorem~\ref{thm:bijection}. }
\label{fig:beforew2}
\end{center}
\end{figure}
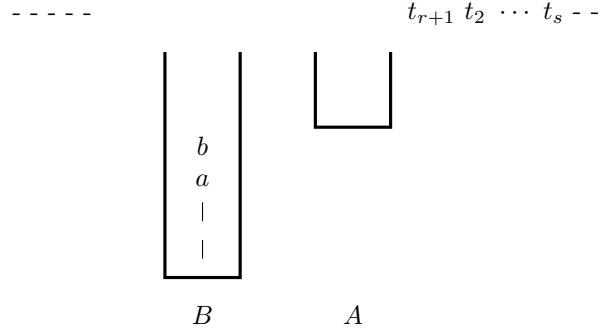

If $w_2$ is empty then $v$ has prefix $z\lambda\rho w_1\lambda_b\mu_b$ with $w_1\in\lan_{1,\infty}$ which is forbidden, so $w_2$ must move some tokens. 
 The subword $w_2$ cannot leave any tokens in stack $B$.
Either $w_2$ leaves some tokens in stack $A$, or not.

 If $w_2$ leaves a token in stack $A$, this token cannot be one of $t_{r+1},\dots, t_s$ or else $v$ would generate a different permutation to $u$.
Therefore this token is moved into stack $A$ after $t_r$ by a letter $\rho$. This letter cannot be followed by $\mu$, and since it remains in stack $A$ it is not followed by $\lambda$. So this letter is either the last letter of $w_2$, or is followed by another $\rho$, which must also remain in stack $A$. Thus $w_2$ ends with $\rho$, but this is a contradiction since $w_2$ is followed by $\mu_b$.

 Thus $w_2$ does not leave any tokens in stacks $A$ or $B$, so  moves $t_{r+1},\dots, t_s$ from the input to the output, and $w_2\in\lan_{2,\infty}$. Note that $w_1w_2$ produces the same permutation of $t_1,\dots, t_s$ as $w$ does, and $w\in\lan_{1,\infty}$ so generates a $312$-avoiding permutation of $t_1,\dots,t_s$. The subword $w_1$ permutes the first $r$ tokens, and so $w_2$ must produce a permutation of $t_{r+1},\dots, t_s$ that avoids $312$. In this case $v$ has  prefix $z\lambda\rho_b w_1\lambda_b w_2 \mu_b$ where $\diffrl(z\lambda\rho_b)=1$, $w_1\in \lan_{1,\infty}$ and $w_2$ generates a $312$-avoider, so by Lemma~\ref{lem:replaceKnuth} $v$ must also contain a prefix that is not allowed if $v\in \lan$. This is a contradiction, so we cannot have two such words $u$ and $v$.
\end{proof}

\subsection{A related class of permutations}

A natural question to ask is whether switching the order of the stacks makes any difference to the problem.
Let $\mathcal Q$ be the set of permutations that can be generated by passing an ordered sequence through an infinite  stack
followed by a  depth 2 stack in series. Each word $w\in\lan_{2,\infty}$ encodes a permutation in $\mathcal Q$ as follows: 
reading $w$ from right to left, for each $\mu$ move a token from the input to the infinite stack, for each $\lambda$ move a token from the infinite stack to the depth 2 stack, and 
for each $\rho$ move a token from the depth 2 stack to the output. 
It follows that $\mathcal P$ and $\mathcal Q$ are in bijection.




\section{Constructing a pushdown automaton}\label{sec:pda}

In this section we construct a deterministic pushdown automaton  accepting on empty stack, which accepts the language \[\lan\$=\{w\$ \mid w\in\lan\}.\]

A {\em pushdown automaton accepting on empty stack} $M$ is the following: \begin{enumerate}\item
$Q$ a finite set of {\em states},
\item $\Sigma$ a finite {\em input alphabet},
\item $\Gamma$ a finite {\em stack alphabet},
\item $q_0\in Q$ the {\em start state},
\item $0\in\Gamma$ a special stack symbol,
\item a map $\delta$  from $Q\times (\Sigma\cup\e)\times \Gamma$ to finite subsets of $Q\times (\Gamma^*)$,
\end{enumerate}
which runs as follows. Before reading input, the stack contains a single $0$.
Input strings are accepted as soon as the stack becomes empty.
A {\em configuration} of $M$  is a pair $(q, \omega)$ where $q$ is the current state and $\omega\in\Gamma^*$ is a string of stack symbols representing the contents of the stack (the first letter of $\omega$ is the top of the stack).
The notation $\delta(q_i,a, k)=\{(q_{j_1}, \gamma_1), \dots,(q_{j_s}, \gamma_s)\}$ means that 
if $M$ has the configuration $(q_i, k\omega)$ and $a\in\Sigma\cup\{\e\}$ is the next input letter to be read, then $M$ can move to the configuration $(q_{j_l}, \gamma_l\omega)$ for some $1\leq l\leq s$, removing the token $k$ from the top of the stack and replacing it by $\gamma_l$.

See \cite{\HU} for more details.

A pushdown automaton 
is {\em deterministic} if for each state $q$ and stack symbol $i$
\begin{enumerate}
\item if  $|\delta(q,\e, i)|=1$ then $|\delta(q,a,i)|=0$ for all $a\in \Sigma$,
\item for each $a\in\Sigma\cup\{\e\}$ the set
$\delta(q,a,i)$ has size at most one.
\end{enumerate}

Note  that a determistic pushdown automaton accepting on empty stack cannot accept the empty string (unless this is the only string it accepts) since there would have to be a transition $\delta(q_0, \e, 0)$ as well as a transition $\delta(q_0, a, 0)$ for some letter $a$.

 Let $M$ be the pushdown automaton shown in Figure~\ref{fig:PDA}, which accepts on empty stack.

\begin{figure}[h!]

\begin{tikzpicture}[node distance = 4cm,->,>=stealth']

\node [state, initial] (s) {$q_0$};
\node [state] (q1) [right of=s] {$q_1$};
\node [state] (q2) [right of=q1] {$q_2$};
\node [state] (q3) [below of=s] {$q_3$};
\node [state] (q4) [below of = q1] {$q_4$};
\node [state] (q5) [below of=q2] {$q_5$};

\node [state] (q7) [below of=q4] {$q_7$};

\node [state] (q6) [below of =q7] {$q_6$};
\node [state] (q8) [below  of = q5] {$q_8$};

\path (s) edge [loop, style={min distance = 1cm,in = 65, out = 115},above] node {$\begin{array}{l}\$,0\ra\e\\\mu,1\ra\e\\\mu,2\ra\e\end{array}$} (s);

\path (s) edge [above] node {$\rho, i\ra i$} (q1);
\path (q1) edge [above] node {$\rho, i\ra i$} (q2);
\path (q1) edge [left] node {$\lambda,i\ra 1i$} (q3);
\path (q2) edge [right] node {$\lambda,i\ra 1i$} (q5);
\path (q3) edge [style={in = 180, out = 270}, left] node {$\rho, i\ra i$} (q6);
\path (q5) edge [left] node {$\lambda,i\ra 1i$} (q7);
\path (q4) edge [right] node {$\rho, i\ra i$} (q2);
\path (q4) edge [above] node {$\lambda,i\ra 1i$} (q3);
\path (q5) edge [above] node {$\mu,1\ra\e$} (q4);
\path (q4) edge [loop, style={min distance = 1cm,in = 65, out = 115},above] node {$\begin{array}{l}\mu,1\ra\e\\\mu,2\ra\e\end{array}$} (q4);
\path (q3) edge [left] node {$\mu,1\ra\e$} (s);
\path (q8) edge  [style={min distance = 1cm,in = 285, out = 75},right] node {$\lambda,i\ra 2i$} (q5);
\path (q5) edge [style={min distance = 1cm,in = 105, out = 255},left] node {$\rho, i\ra i$} (q8);
\path (q5) edge [loop,style={min distance = 1cm,in = 335, out = 25},right] node {$\mu,2\ra\e$} (q5);

\path (q6) edge [style={min distance = 1cm,in =  255, out = 105}, left] node {$\lambda,i\ra 1i$} (q7);
\path (q7) edge [style={min distance = 1cm,in = 75 , out = 285 },right] node {$\rho, i\ra i$} (q6);

\path (q6) edge  [right] node {$\rho, i\ra i$} (q8);
\end{tikzpicture}

\caption{Pushdown automaton $M$ accepting on empty stack, with start configuration $(q_0,0)$. The  symbol $i\in\{0,1,2\}$ represents a stack token that is kept in place by a transition. \label{fig:PDA}}

\end{figure}
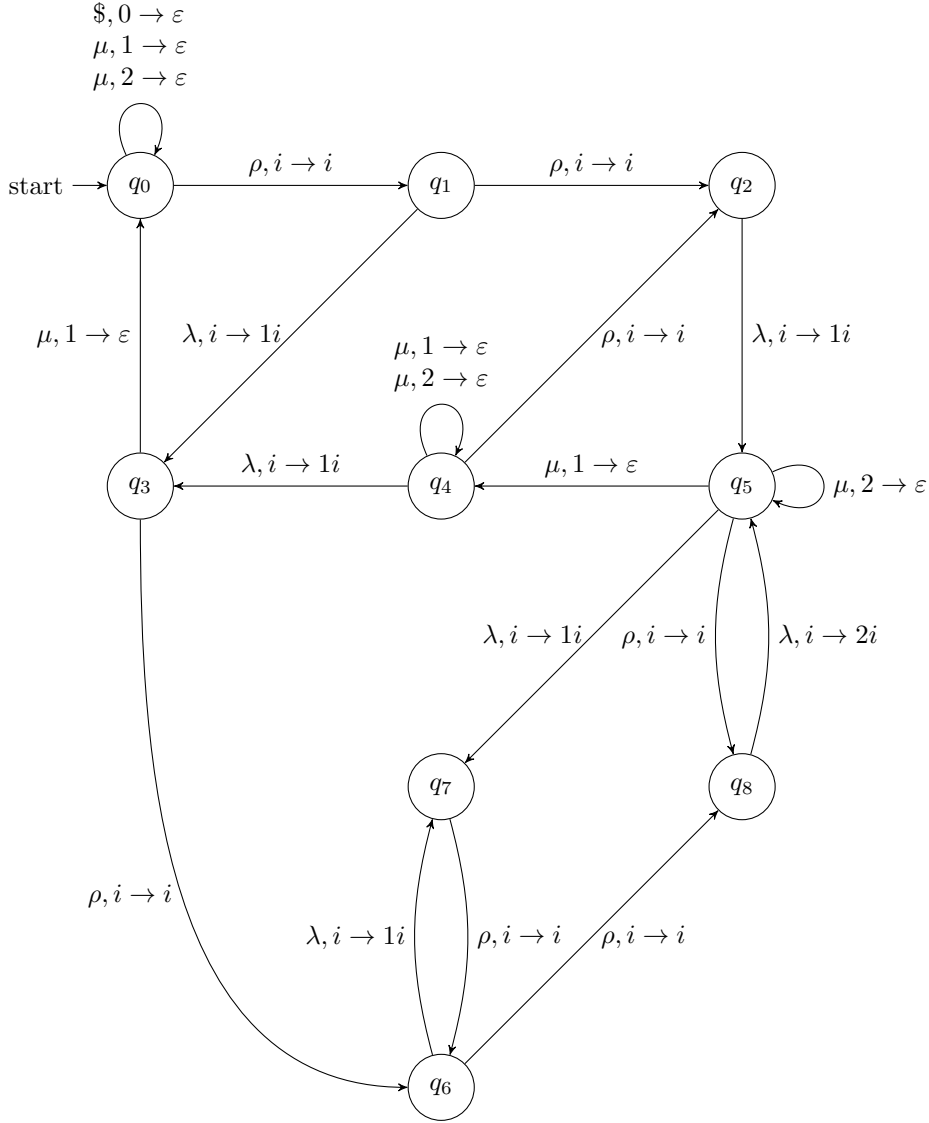


The pushdown automaton uses its stack to keep track of $\difflu$ as it reads its input, and its states to keep track of  $\diffrl$.  It uses the stack symbol $2$ as a device 
to flag when  the input has the potential to have a prefix of the form $w_0\rho\lambda$ or $w_0\lambda\rho$  with $\diffrl(w_0)=1$.
Paths $\rho\mu$ are forbidden. 
We will prove that the language of this automaton is precisely the language $\lan$.

Here is the formal description of $M$. Note that states $q_3,q_6,q_7$ are reached only when 1 is on top of the stack, and  $q_5,q_8$ are reached when  either 1 or 2 are on top of the stack, so we have omitted transitions from configurations that are not possible.

\begin{enumerate}\item 
 states $Q=\{q_0,\dots, q_{8}\}$, 
\item  input alphabet $\Sigma=\{\rho,\lambda, \mu, \$\}$, 
\item stack alphabet $\Gamma=\{0,1,2\}$,
\item start state $q_0$,
\item transition function $\delta$ defined as follows.
\[\begin{array}{lll}
\delta(q_0, \$,  0) = (q_0, \e) 	\\
\delta(q_0, \mu,  1) = (q_0, \e) 	\\
\delta(q_0, \mu,  2) = (q_0, \e)  	\\

\delta(q_3, \mu,  1) = (q_0, \e) \\
\delta(q_4, \mu,  1) = (q_4, \e) \\
\delta(q_4, \mu,  2) = (q_4, \e) \\

\delta(q_5, \mu,  1) = (q_4, \e)\\
\delta(q_5, \mu,  2) = (q_5, \e)\\
\\
\\
\\
\\
\\
\\

\end{array}\ \ \ \ \ \
\begin{array}{lll}
\delta(q_0, \rho,  0) = (q_1, 0) \\
\delta(q_0, \rho,  1) = (q_1, 1) 	\\
\delta(q_0, \rho,  2) = (q_1, 2) 	\\
	
\delta(q_1, \rho,  0) = (q_2, 0) \\
\delta(q_1, \rho,  1) = (q_2, 1) 	\\
\delta(q_1, \rho,  2) = (q_2, 2) 	\\

\delta(q_3, \rho,  1) = (q_6, 1)\\
\delta(q_4, \rho,  0) = (q_2, 0)\\
\delta(q_4, \rho,  1) = (q_2, 1) \\
\delta(q_4, \rho,  2) = (q_2, 2)\\

\delta(q_5, \rho,  1) = (q_8, 1) \\
\delta(q_5, \rho,  2) = (q_8, 2)\\

\delta(q_6, \rho,  1) = (q_8, 1)\\

\delta(q_7, \rho,  1) = (q_6, 1)\\
\end{array}\ \ \ \ \ \
\begin{array}{lll}
\delta(q_1, \lambda,  0) = (q_3, 10) 	\\
\delta(q_1, \lambda,  1) = (q_3, 11) 	\\
\delta(q_1, \lambda,  2) = (q_3, 12) 	\\

\delta(q_2, \lambda,  0) = (q_5, 10) 	\\
\delta(q_2, \lambda,  1) = (q_5, 11) 	\\
\delta(q_2, \lambda,  2) = (q_5, 12) 	\\

\delta(q_4, \lambda,  0) = (q_3, 10) 	\\
\delta(q_4, \lambda,  1) = (q_3, 11) 	\\
\delta(q_4, \lambda,  2) = (q_3, 12) 	\\

\delta(q_5, \lambda,  1) = (q_7, 11) 	\\
\delta(q_5, \lambda,  2) = (q_7, 12) 	\\
\delta(q_6, \lambda,  1) = (q_7, 11) 	\\

\delta(q_8, \lambda,  1) = (q_5, 21) 	\\
\delta(q_8, \lambda,  2) = (q_5, 22) 	\\

\end{array}\]
\end{enumerate}

 To prove that  $M$ accepts precisely the language $\lan$, we first show that $M$ is deterministic. This allows us to identify input words with unique paths in $M$ and simplify our arguments slightly.

\begin{lem}
The pushdown automaton $M$ is deterministic.
\end{lem}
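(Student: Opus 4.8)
The statement to be proved is that the pushdown automaton $M$ of Figure~\ref{fig:PDA} is deterministic, which by the definition given just before the lemma amounts to checking, for every state $q$ and every stack symbol $i\in\{0,1,2\}$ that can actually appear on top of the stack in that state, two things: first, that for each input letter $a\in\Sigma\cup\{\e\}$ the set $\delta(q,a,i)$ has at most one element; and second, that whenever an $\e$-transition $\delta(q,\e,i)$ is defined, no transition $\delta(q,a,i)$ with $a\in\Sigma$ is defined. The plan is simply to run through the transition table exhaustively, organised by the pair $(q,i)$, and verify both conditions. Because the table is explicitly given, this is a finite bookkeeping check rather than a conceptual argument.

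First I would record the two structural observations already made in the text: that $M$ has no $\e$-transitions at all except the $\$$-transition $\delta(q_0,\$,0)=(q_0,\e)$, which reads the end-marker $\$\in\Sigma$ and is therefore \emph{not} an $\e$-transition --- so in fact $M$ has \emph{no} $\e$-moves, and the first determinism condition is vacuous. (If one prefers to treat $\$$ as a genuine input letter, the point stands.) This already disposes of condition (1) of the definition. For condition (2), I would then go state by state. In $q_0$ the relevant pairs are $(q_0,0),(q_0,1),(q_0,2)$; reading off the table, $(q_0,0)$ admits only $\$$ and $\rho$, $(q_0,1)$ only $\mu$ and $\rho$, $(q_0,2)$ only $\mu$ and $\rho$, and in each case each letter has a single image. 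States $q_1,q_2,q_4$ have outgoing transitions on $\rho$ and $\lambda$ only, never both leading anywhere ambiguous, and for each admissible top-of-stack symbol the image is unique; here one uses the remark that $q_3,q_6,q_7$ are entered only with $1$ on top and $q_5,q_8$ only with $1$ or $2$ on top, so that the ``missing'' rows of the table correspond to unreachable configurations and need not be considered. The remaining states $q_3,q_5,q_6,q_7,q_8$ are handled the same way: for $q_5$ with top symbol $1$ the letters $\mu,\rho,\lambda$ each have exactly one image and no two agree; for $q_5$ with top symbol $2$ likewise; for $q_3$ with top $1$ the letters are $\mu,\rho,\lambda$, each with one image; $q_6,q_7,q_8$ similarly.

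There is essentially no obstacle here --- the only thing that requires a little care is making sure that no configuration $(q,i)$ that is genuinely reachable has been silently omitted from the table, since an omitted row is harmless for determinism but would matter for the later claim that $L(M)=\lan$. I would therefore accompany the case check with a one-line justification, by induction on the length of the computation, that the reachability constraints stated in the text (namely: $q_3,q_6,q_7$ occur only with $1$ on the stack top, $q_5,q_8$ only with $1$ or $2$) are invariant under all transitions, so that the table as written is complete on reachable configurations. With that in hand, the exhaustive inspection of the table establishes that $\delta(q,a,i)$ has size at most one for all $q,i$ and all $a\in\Sigma\cup\{\e\}$, and that no $\e$-transition competes with an input transition, so $M$ is deterministic.
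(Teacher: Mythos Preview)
Your proposal is correct and takes essentially the same approach as the paper, which simply states that the claim ``is easily verified by considering the formal description for $M$.'' You have spelled out in more detail what that verification entails --- noting in particular that there are no $\e$-transitions, so only the single-valuedness condition needs checking --- but the underlying argument is the same finite inspection of the transition table.
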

\begin{proof}
The claim is easily verified by considering the formal description for $M$.
\end{proof}

\begin{prop}\label{prop:PDAcorrect}
The pushdown automaton $M$   accepts the language $\lan\$=\{w\$ \mid w\in\lan\}$.
\end{prop}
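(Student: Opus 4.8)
The plan is to prove the two inclusions $L(M) = \lan\$$ separately by analysing the runs of $M$ against the combinatorial definition of $\lan$. The central observation to establish first is an \emph{invariant} describing the configuration $(q,\omega)$ of $M$ after it has read a prefix $u$ of an input word $w\$$: namely, that the stack height minus one equals $\difflu(u)$, that the stack contents record the positions where a ``$2$-flag'' is live, that the state records $\diffrl(u)\in\{0,1,2\}$ together with enough local history (the last one or two letters read) to detect the forbidden subwords of Definition~\ref{DefL}, and in particular that $M$ crashes (has no available transition) exactly when the prefix $u$ cannot be extended inside $\lan$. Concretely, one shows by induction on $|u|$ that $q_0,q_1,q_2$ are the states with $\diffrl(u)=0$ (fresh, after one $\rho$, after two $\rho$'s with no intervening $\lambda$), $q_3,q_4$ are the states with $\diffrl(u)=1$ reached via a $\lambda$ (so $\mu$ is legal but $\rho\mu$ must be blocked appropriately), and $q_5,q_6,q_7,q_8$ handle the situation where a $2$ has been pushed, i.e.\ where a prefix of the shape $w_0\rho\lambda$ or $w_0\lambda\rho$ with $\diffrl(w_0)=1$ has occurred and the automaton is watching for the completing $\lambda\mu$.

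For the inclusion $\lan\$ \subseteq L(M)$, I would take $w\in\lan$, feed $w\$$ to $M$, and use the invariant to check that every letter of $w$ has a legal transition (using conditions (1)--(3) of Definition~\ref{DefL} to rule out precisely the crash configurations), that after reading all of $w$ the stack has height one — because $\difflu(w)=0$ by Definition~\ref{defn:Lk-infty}(2) and the invariant — with a single $0$ on it, and that the final $\$$ transition $\delta(q_0,\$,0)=(q_0,\e)$ empties the stack; one also checks the state after reading $w$ is $q_0$, which follows because $\diffrl(w)=0$ and the only $\$$-transition sits at $q_0$. For the reverse inclusion $L(M)\subseteq\lan\$$, I would take an accepting run: since $M$ is deterministic (already proved) the run is unique, it must end by consuming a $\$$ and emptying the stack, which by inspection forces the last symbol to be $\$$ and the pre-$\$$ configuration to be $(q_0,0)$; then the invariant gives $\diffrl(w)=\difflu(w)=0$ and $\diffrl(u)\in[0,2]$, $\difflu(u)\ge 0$ for all prefixes $u$ (stack never underflows), so $w\in\lan_{2,\infty}$, and the fact that the run never crashed, combined with the ``crash $\Leftrightarrow$ forbidden prefix'' half of the invariant, shows $w$ avoids the three forbidden patterns, hence $w\in\lan$.

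The main obstacle is proving the invariant correctly — in particular, verifying that the four-state gadget $\{q_5,q_6,q_7,q_8\}$ with the stack symbol $2$ faithfully tracks ``there is a live prefix $w_0 v$ with $v\in\{\rho\lambda,\lambda\rho\}$, $\diffrl(w_0)=1$, and the intervening block read since then lies in $\lan_{1,\infty}$'' and fires the block on the completing $\lambda\mu$ but not otherwise. This requires a careful case analysis: the $2$ is pushed when we go $q_8\xrightarrow{\lambda}q_5$, the states $q_5,q_6,q_7,q_8$ simulate movement at $\diffrl$-levels $\{1,2\}$ while a $2$ sits somewhere in the stack, and one must check that the only way to ``pop past'' the $2$ is via the $\mu$-transitions that correspond to the forbidden $\lambda\mu$ completion, so that the relevant crash occurs exactly when Definition~\ref{DefL}(2) or (3) would be violated; the subtlety is that $w_1\in\lan_{1,\infty}$ is a condition on a whole subword, and one must argue that the automaton's returning to the state/stack-symbol configuration that "remembers" the $2$ is equivalent to that subword being a legal element of $\lan_{1,\infty}$. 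I expect the bookkeeping to be routine once the invariant is stated precisely, but stating it precisely — enumerating which $(q,\text{top-of-stack})$ pairs are reachable and what each encodes — is the real content of the proof.
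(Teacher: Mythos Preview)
Your high-level strategy --- establish a configuration invariant and use it to prove the two inclusions --- is exactly what the paper does, and your identification of ``stating the invariant precisely'' as the real content is correct. However, the concrete invariant you sketch is wrong in several places, and since the proof is nothing but a case analysis driven by that invariant, these errors are not cosmetic.

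First, the $\diffrl$-values you assign to states are incorrect. The actual values (which the paper tabulates and verifies by inspection) are: $q_0,q_3,q_7$ have $\diffrl=0$; $q_1,q_4,q_5,q_6$ have $\diffrl=1$; $q_2,q_8$ have $\diffrl=2$. So $q_3$ has $\diffrl=0$, not $1$, and $q_7$ has $\diffrl=0$, not something in $\{1,2\}$. Second, your description of $\{q_5,q_6,q_7,q_8\}$ as ``the situation where a $2$ has been pushed'' is off: one enters $q_5$ from $q_2$ via $\lambda$ (pushing a $1$), and $q_6$ from $q_3$ via $\rho$, with no $2$ anywhere; the $2$ is pushed only on the edge $q_8\xrightarrow{\lambda}q_5$. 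Third, the crash mechanism is not ``popping past the $2$'': the rejection of a bad prefix $w_0vw_1\lambda\mu$ happens because $q_7$ has \emph{no} $\mu$-transition at all --- after $w_0vw_1$ the automaton sits at $q_5$ or $q_6$, the next $\lambda$ moves to $q_7$ (pushing a $1$), and then the $\mu$ has nowhere to go.

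The paper's argument for the harder direction (accepted $\Rightarrow w\in\lan$) is organised around exactly this last point: assuming $w\in\lan_{2,\infty}$ contains no $\rho\mu$, it checks state by state that the \emph{only} place $M$ can crash is at $q_7$ on input $\mu$, and then backtracks from $q_7$ --- locating the most recent $\lambda$ that pushed a $1$ onto the stack and splitting into cases according to which edge that was --- to exhibit the forbidden prefix explicitly. That backtracking is the substantive step; with your mistaken picture of which states carry which $\diffrl$-value and where the $2$'s live, you would not be able to carry it out.
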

\begin{proof}
Since $M$ is deterministic, we identify input words with their corresponding unique path in $M$.

 Let $w\in\{\rho,\lambda,\mu\}^*$. We must show that
\begin{enumerate}
\item  if $w$ contains $\rho\mu$, then $w\$$ is rejected.
\item if $w$ fails to be in $\lan_{2,\infty}$, then $w\$$  is rejected, 
\item  if $w$  has a bad prefix (conditions (2) and (3) in Definition~\ref{DefL}), then $w\$$  rejected.
\item if $w\$$ is rejected, then $w\not\in \lan$.\end{enumerate}

 The only states that can be reached by a path $u\rho$ for $u\in\{\rho,\lambda, \mu\}^*$ from the start configuration
are  $q_1, q_2, q_6$ and $ q_8$ and 
since none are the source of a $\mu$ transition, any word containing $\rho\mu$ will be rejected.

Next, we show that if $w$ is not in $\lan_{2,\infty}$, then $w\$$ is rejected by $M$.  Each state represents the endpoint of a path labeling a prefix of an input string accepted by
the automaton. One can verify the values of $D_{\rho,\lambda}(u)$ for each path labeled $u$ ending at state $q_i$  given by Table~\ref{tableD}.

\begin{table}[h!]
\[
\begin{array}{|c|c|}
\hline
\mathrm{state} &  D_{\rho,\lambda}\\
\hline
q_0 & 0\\
q_1 & 1\\
q_2 & 2\\
q_3 & 0\\
q_4 & 1\\
q_5 & 1\\
q_6 & 1\\
q_7 & 0\\
q_8 & 2\\
\hline
\end{array}
\]
\caption{Value of $D_{\rho,\lambda}$ for any prefix ending at each state.\label{tableD}}
\end{table}


Let $h(u)$ be the height of the stack after reading  $u\in \{\rho,\lambda,\mu\}^*$ starting from the start configuration $(q_0,0)$. Then $h(\e)=1$, $h(u\rho)=h(u), h(u\lambda)=h(u)+1$ and $h(u\mu)=h(u)-1$ since $\lambda$  pushes a token to the stack, $\mu$ pops a token and $\rho$ keeps the stack unchanged. It follows that $h(u)=D_{\lambda,\mu}(u)+1$, and since $0$ stays on the stack until $\$$ is read, $h(u)\geq 1$ for all  prefixes $u\in \{\rho,\lambda,\mu\}^*$, so  $D_{\lambda,\mu}(u)\geq 0$.
If $w\$$ is accepted then the stack must contain only $0$ after reading $w$, so $D_{\lambda,\mu}(w)=0$.

It follows that if $D_{\rho,\lambda}>2, D_{\rho,\lambda},D_{\lambda,\mu}(u)<0$ for some prefix $u$, or $D_{\lambda,\mu}(w)\neq 0$, then $M$ will reject $w\$$.

Next, suppose $w\in \lan_{2,\infty}$ has no $\rho\mu$ substring and  a prefix of the form $w_0vw_1\lambda\mu$ where $D_{\rho,\lambda}(w_0)=1, v\in\{\rho\lambda,\lambda\rho\}$ and $w_1\in\lan_{1,\infty}$.  The string $w_0$ labels a path in the automaton starting at $q_0$ and ending at state $q_1, q_4, q_5$ or $q_6$ by Table~\ref{tableD}. From each of these states, reading $v=\rho\lambda$ ends in state $q_5$, and reading $v=\lambda\rho$ ends in state $q_6$.

From $q_5$, the  word $w_1$ labels a path that visits only states $q_5$ and $q_8$, since $D_{\lambda,\mu}(z)\geq 0$ for all prefixes $z$ of $w_1$, so the 1 on top of the stack before reading $w_1$ remains (and is covered by 2s, which are removed by the $\mu$ loop at $q_5$), and ends at $q_5$ since  $D_{\lambda,\mu}(w_1)= 0$.
From here reading $\lambda\mu$ is rejected.

From $q_6$, if $w_1=\e$ then 
$u\lambda\rho\lambda\mu$ is rejected. Otherwise 
 $w_1$ labels a path from $q_6$ to $q_8$ and then moves between $q_5$ and $q_8$, and ends at $q_5$.
 From here reading $\lambda\mu$ is rejected.


We have now established that if $w\not\in \lan$ then $w\$$ is rejected by $M$. To complete the proof we must show that if $w\$$ is rejected, then $w\not \in\lan$.
To show this, assume  $w\in \lan_{2,\infty}$ with no $\rho\mu$ substring, but $w\$$ is rejected by $M$. We will prove that $w$ must have a bad prefix.


Let $p$ be the longest prefix of $w\$$ labeling a path that is not rejected by $M$.
Since  $w\in \lan_{2,\infty}$
we have $D_{\lambda,\mu}(w)=0$, so if $w=p$, after reading $w$ the stack contains just $0$ so $w\$$ will be accepted, a contradiction.
Thus $p$ is strictly shorter than $w$. Let $w=pxw'$ where $x\in\{\rho\lambda,\mu\}$ is the next letter input after reading $p$.

We now consider the possible states where $p$ can end.

\begin{enumerate}
\item Suppose $p$ ends at $q_0$. Then $D_{\rho,\lambda}(p)=0$ so  $x\neq \lambda$. 
If the top of stack is $0$ then $D_{\lambda, \mu}(p)=0$ so $x\neq \mu$. 
Otherwise $M$ cannot reject on reading $\rho,\mu$.
\item 
Suppose $p$ ends at $q_1$, so its last letter is $\rho$,  and $D_{\rho,\lambda}(p)=1$.  Then $x\neq \mu$. Otherwise $M$ cannot reject on reading $\rho,\lambda$.
\item Suppose $p$ ends at $q_2$, so its last letter is $\rho$, and $D_{\rho,\lambda}(p)=2$. Then $x\neq \mu, \rho$. Otherwise $M$ cannot reject on reading $\lambda$.
\item Suppose $p$ ends at $q_3$, so  $D_{\rho,\lambda}(p)=0$ and the top of stack is $1$. Then $x\neq \lambda$. Otherwise $M$ cannot reject on reading $\rho,(\mu, 1\ra \e)$. 
\item Suppose $p$ ends at $q_4$, so $D_{\rho,\lambda}(p)=1$. The only way $M$ could reject is if the top of stack is $0$ and $x=\mu$, which is not possible since $w\in\lan_{2,\infty}$.
\item  Suppose $p$ ends at $q_5$, so $D_{\rho,\lambda}(p)=1$ and $1$ is on top of the stack. 
Then no letter will cause $M$ to reject.
\item  Suppose $p$ ends at $q_6$, so $D_{\rho,\lambda}(p)=1$ and $p$ ends with $\lambda\rho$. Then $x$ cannot be $\mu$, and otherwise $px$ is not rejected.

\item Suppose $p$ ends at $q_8$, so its last letter is $\rho$,  and $D_{\rho,\lambda}(p)=2$. Then $x\neq \mu,\rho$ and $M$ cannot reject if $x=\lambda$.
\end{enumerate}
These cases show that if $p$ ends at any state except $q_7$, then 
$M$ does not reject $w$ on reading the next input letter.
We finish the proof by showing that 
  if $p$ ends at $q_7$, then $px$ is a bad prefix.

Since $p$ ends at $q_7$,  $p$ ends with $\lambda$, $\diffrl(p)=2$, and $\difflu(p)>0$.
If $x=\rho$ then $px$ is not rejected. If $x=\lambda$ then $w\not\in\lan_{2,\infty}$. So we must have $x=\mu$.

Let $p=p_1\lambda$. If $p_1$ ends at $q_6$, then $p_1=p_2\lambda\rho$, and $px=p_2\lambda\rho\lambda\mu$ where $\diffrl(p_2)=1$ and so $px$ is a bad prefix.  The machine correctly rejects the string on reading $x=\mu$.

Otherwise $p_1$ ends at $q_5$. Either $p_1$ ends with $\rho\lambda$, or $\mu$. If $p_1=p_2\rho\lambda$ then $\diffrl(p_2)=1$ and $px=p_2\rho\lambda\lambda\mu$ is a bad prefix.  Otherwise $p_1$ ends in $\mu$, and must pop a token $2$ from the stack.  Let $\lambda_*$ be the last $\lambda$ letter in $p_1$ that pushed a $1$ on top of the stack  (which must exist, since all paths to $q_5$ must cross such an edge).  Write $p_1=p_2\lambda_*p_3\mu$.  

The letter $\lambda_*$ labels one of the following four edges: 
\begin{enumerate}
\item from $q_2$ to $q_5$, 
\item from $q_1$ to $q_3$,
\item from $q_4$ to $q_3$,
\item from $q_5$ to $q_7$,
\item from $q_6$ to $q_7$.
\end{enumerate}

In the first case, $p_2$ ends at $q_2$ so must have the form $p_2=u\rho$ with $\diffrl(u)=1$. Then $p_3\mu$ labels a path that moves between states $q_5$ and $q_8$, reading $\rho\lambda$ and pushing a 2, or reading $\mu$ and popping a 2, so $p_3\mu\in\lan_{1,\infty}$.
It follows that $w$ has the bad prefix $u\rho\lambda_*(p_3\mu)\lambda\mu$, and so $M$ correctly rejects it.

In the other four cases we have that $\diffrl(p_2)=1$ since $p_2$ ends at state $q_1, q_4, q_5$ or $q_6$, 
$\lambda_*$ must be immediately followed by a letter $\rho$, and $p_2\lambda_*\rho$ ends at state $q_6$. Let $p_3=\rho p_4$. Then $p_4\mu$ labels a path that starts at $q_6$, goes to $q_8$, then moves between states $q_5$ and $q_8$, reading $\rho\lambda$ and pushing a 2, or reading $\mu$ and popping a 2.
So  $p_4\mu\in\lan_{1,\infty}$.
It follows that $w$ has the bad prefix $p_2\lambda_*\rho(p_4\mu)\lambda\mu$, and so $M$ correctly rejects it.
\end{proof}

%
%

\section{Obtaining the generating function}


\begin{thm}\label{thm:gfun}
The sequence counting the number of permutations of each length in $\mathcal P$ has an algebraic generating function:
\begin{align*}
\sum_{n\geq 0} c_n z^n &= \frac{(1+q)\left(1+5q-q^2-q^3-(1-q)\sqrt{(1-q^2)(1-4q-q^2)}\right)}{8q}
\end{align*}
where  $c_n$ is the number of permutations in $\mathcal P$ of length $n$,  and $q \equiv q(z) = \frac{1-2z-\sqrt{1-4z}}{2z}$. 
\end{thm}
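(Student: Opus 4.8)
The plan is to extract from the deterministic pushdown automaton $M$ of Section~\ref{sec:pda} a context-free grammar for $\lan\$$, convert it into a system of algebraic equations for the corresponding generating functions, and then eliminate variables to obtain a single polynomial equation satisfied by $C(z)=\sum_{n\geq 0}c_n z^n$. By Theorem~\ref{thm:bijection} a permutation of length $n$ corresponds to a word of length $3n$ in $\lan$, so if $F(t)$ is the generating function counting words in $\lan$ by length, then $C(z)=F(z^{1/3})$ — equivalently, after the substitution $t^3\mapsto z$. Concretely, I would first apply the standard triple-construction (the Chomsky--Sch\"utzenberger / pushdown-to-grammar conversion described in \cite{\HU}) to $M$: introduce a nonterminal $[q,A,q']$ for each pair of states $q,q'$ and each stack symbol $A$, meaning ``starting in state $q$ with $A$ on top, consume some input and return to state $q'$ having net-popped exactly $A$''. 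Since $M$ is deterministic and its stack behaviour is tightly constrained (the symbol $0$ sits at the bottom, $2$'s only appear in the $q_5/q_8$ loop on top of a $1$, etc.), most of these nonterminals are empty or trivial, so the grammar collapses to a small system.

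Next I would set up the generating function system. Each nonterminal $[q,A,q']$ gets a generating function in the length variable $t$; each grammar production becomes a monomial (a terminal contributes a factor $t$, the terminal $\$$ contributes $1$ since it is not part of $w$, and concatenation becomes multiplication), and alternation becomes addition. The recursive structure here is genuinely Catalan-like: the inner loops at $q_4$, $q_5$ and the $q_6$--$q_7$--$q_8$ cycle each produce a balanced-parenthesis-type sublanguage, which is why the answer is naturally phrased in terms of $q(z)=\frac{1-2z-\sqrt{1-4z}}{2z}$, a shifted Catalan series (note $q$ satisfies $q = 1 + zq + zq^2$ or an equivalent quadratic, which I would record explicitly and use as the workhorse substitution). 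I would identify the sublanguage generated from $q_5$ with $1$ on the stack — call its generating function $S$ — show it satisfies a quadratic like $S = 1 + (\text{stuff})\,S + (\text{stuff})\,S^2$, and similarly handle the ``depth-1'' pieces $\lan_{1,\infty}$ that appear throughout the bijection proof, whose generating function is exactly a Catalan-type series in $t^3$.

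Finally I would assemble $F(t)$ as a rational expression in these sub-generating-functions and the Catalan series, perform the substitution $z=t^3$ (so that $q=q(z)$), and simplify. The end result should be exactly the claimed closed form; verifying it is a matter of checking that the assembled expression satisfies the same polynomial equation, or simply expanding both sides as power series to enough terms (and cross-checking against the small-$k$ rational approximations mentioned in the introduction, and against the sequence $c_n$ computed directly). The asymptotic growth $c_n^{1/n}\to 2+2\sqrt5$ then follows by locating the dominant singularity: it is the branch point of $\sqrt{(1-q^2)(1-4q-q^2)}$ nearest the origin, i.e. where $1-4q-q^2=0$, giving $q=-2+\sqrt5$, which pulls back through $q(z)$ to $z_c = \frac{q_c}{(1+q_c)^2}$; one checks the $\sqrt{1-4z}$ singularity of $q(z)$ itself at $z=1/4$ is not dominant, so $c_n^{1/n}\to 1/z_c = \frac{(1+q_c)^2}{q_c}$, which simplifies to $2+2\sqrt5$.

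\medskip

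The main obstacle I anticipate is purely bookkeeping: the automaton has nine states and a three-letter stack alphabet, so the naive triple construction gives on the order of $9\cdot 9\cdot 3$ nonterminals, and while the vast majority vanish, correctly pruning the system — keeping track of exactly which $(q,A,q')$ triples are reachable and which stack configurations actually occur (the comment before the formal description of $M$ already notes that $q_3,q_6,q_7$ only occur with $1$ on top, and $q_5,q_8$ with $1$ or $2$) — requires care, and a single dropped production would corrupt the final generating function. A secondary subtlety is handling the terminal $\$$ and the accept-on-empty-stack convention so that the length count comes out as $3n$ rather than $3n+1$, and making sure the empty-word edge case (the empty permutation, $c_0=1$) is included correctly since a deterministic PDA accepting on empty stack cannot itself accept $\e$. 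Both of these are routine but error-prone, which is precisely why the authors remark that a slicker route ``should be much easier'' — here we just grind through the standard theory.
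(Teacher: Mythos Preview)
Your proposal is correct and follows essentially the same route as the paper: apply the Hopcroft--Ullman triple construction to $M$, prune the unreachable and unproductive nonterminals $[q,A,q']$, translate the resulting grammar into a polynomial system for the generating functions, and solve (the paper hands the final elimination to Maple rather than isolating Catalan sublanguages by hand, and weights $\$$ by $z$ then divides it out at the end rather than weighting it by $1$, but these are cosmetic). One small slip to fix when you carry it out: the correct quadratic for $q$ is $q=z(1+q)^2$ (so $q(0)=0$), not $q=1+zq+zq^2$.
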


\begin{proof}
We convert the  pushdown automaton given  in the previous section to an unambiguous context-free language, following the standard procedure as described in Hopcroft and Ullman  \cite{\HU}.  
Theorem~10.12 of Hopcroft and Ullman guarantees that the grammar obtained from a deterministic pushdown
 automaton accepting on empty stack is $LR(0)$ and hence unambiguous.

We then apply the  Chomsky and Sch\"utzenberger theorem, as outlined for example in  \cite{\Flajolet}  I.5.4, to obtain an algebraic generating function.
Since each step in this procedure is constructive, we can find the generating function explicitly. 

We start by converting the pushdown automaton to a grammar. See Theorem~5.4 \cite{\HU} for full details.

Define a grammar with nonterminals $S$ and $[q_i,j,q_k]=N_{i,j,k}$ for each pair of states $q_i,q_k$ and stack symbol $j$.  
 The nonterminal $[q_i,j,q_k]$ represents a path in the configuration space of the pushdown automaton starting at $q_i$ with $j$ on top of the stack and ending at some state $q_k$. The productions ``fill out" these paths with subpaths according to the transitions that are possible.

The production rules are then defined as follows:
\begin{enumerate}
\item for each state $q_i$ we have a production
$S\ra N_{00i}$,
\item for each transition $\delta(q_i,a, j)=\{(q_k, \e)\}$ with $a\in\{\$,\mu\}$, add a production $N_{ijk}=a$,
\item for each transition $\delta(q_i,\rho, j)=\{(q_k, l)\}$,  add productions $N_{ijx}=\rho N_{klx}$ for $0\leq x \leq 8$,
\item for each transition $\delta(q_i,\lambda, j)=\{(q_k, lm)\}$,  add  productions $N_{ijx}=\lambda N_{kly}N_{ymx}$ $0\leq x,y\leq 8$.
\end{enumerate}

This gives the following set of productions, where $0\leq x,y\leq 8$:
\[\begin{array}{lcl}
 N_{000}& \ra &\$\\
   N_{010}& \ra &\mu\\
   N_{020}& \ra &\mu 	\\

   N_{310}& \ra &\mu \\
   N_{414}& \ra &\mu \\
  N_{424}& \ra &\mu \\

   N_{514}& \ra &\mu \\
  N_{525}& \ra &\mu\\
\\
\\
\\
\\
\\
\\
\end{array}\ \ \ \ \ \
\begin{array}{lcl}

N_{00x}& \ra & \rho N_{10x} \\
 N_{01x}& \ra & \rho N_{11x}  \\
 N_{02x}& \ra & \rho N_{12x}    \\
 N_{10x}& \ra & \rho N_{20x}  \\
 N_{11x}& \ra & \rho N_{21x}   \\
 N_{12x}& \ra & \rho N_{22x} 	\\

N_{31x}& \ra & \rho N_{61x}  \\
 N_{40x}& \ra & \rho N_{20x}   \\
 N_{41x}& \ra & \rho N_{21x}   \\
 N_{42x}& \ra & \rho N_{22x}   \\

 N_{51x}& \ra & \rho N_{81x}   \\
 N_{52x}& \ra & \rho N_{82x}  \\
 N_{61x}& \ra & \rho N_{81x}  \\

 N_{71x}& \ra & \rho N_{61x}  \\
\end{array}\ \ \ \ \ \
\begin{array}{lcl}
     N_{10x}& \ra & \lambda N_{31y}N_{y0x} \\
     N_{11x}& \ra & \lambda N_{31y}N_{y1x} \\
     N_{12x}& \ra & \lambda N_{31y}N_{y2x} 	\\
	
     N_{20x}& \ra & \lambda N_{51y}N_{y0x} 	\\
      N_{21x}& \ra & \lambda N_{51y}N_{y1x} 	\\
     N_{22x}& \ra & \lambda N_{51y}N_{y2x} 	\\

     N_{40x}& \ra & \lambda N_{31y}N_{y0x} 	\\
      N_{41x}& \ra & \lambda N_{31y}N_{y1x} 	\\
     N_{42x}& \ra & \lambda N_{31y}N_{y2x} 	\\
     N_{51x}& \ra & \lambda N_{71y}N_{y1x} 	\\
      N_{52x}& \ra & \lambda N_{71y}N_{y2x} 	\\
     N_{61x}& \ra & \lambda N_{71y}N_{y1x} 	\\
     N_{81x}& \ra & \lambda N_{52y}N_{y1x} 	\\
     N_{82x}& \ra & \lambda N_{52y}N_{y2x} 	\\
\end{array}\]

We can reduce the size of the grammar description  as follows.
First, observe that the only productions that eliminate nonterminals  (by generating $\$$ or $\mu$) are of the form $N_{\ast j k}$ for $k\in\{0,4,5\}$, and $j=0$ implies $k=0$.
Since  all productions with nonterminals on the right side have the form $N_{\ast ij}\ra \rho N_{\ast ij}$ or $N_{\ast ij}\ra \lambda N_{\ast\ast\ast}N_{\ast ij}$,
it follows that any nonterminal 
$N_{\ast\ast k}$ with $k$ not equal to  $0,4$ or $5$ cannot be eliminated, so we can exclude them from the grammar.

Also, if we start a derivation with $S\ra N_{00k}$ for $k\neq 0$, there will always be a nonterminal of the form $N_{\ast 0k}$ that cannot be eliminated. Therefore it suffices to make $N_{000}$  the start nonterminal and remove all productions involving $S$.

Lastly, the resulting grammar  contain nonterminals $N_{500},N_{504},N_{505}$ that will never produce a string of only terminals, since the configuration $(q_5,0)$ is never realised (to reach $q_5$ the top of stack symbol is either $1$ or $2$.
We modify the above grammar one step further by removing any production involving these nonterminals.

Taking these factors into consideration,  and collecting productions with the same left side together we obtain the following grammar:

\bigskip

$
 \begin{array}{lll}
 N_{000}  & \ra & \$ \mid \rho N_{100},\\
      N_{004}  & \ra & \rho N_{104},\\
    N_{005}  & \ra & \rho N_{105},\\
    N_{010}  & \ra & \mu \mid \rho N_{110},\\
    N_{014}  & \ra & \rho N_{114},\\
    N_{015}  & \ra & \rho N_{115},\\
    N_{020}  & \ra & \mu \mid \rho N_{120},\\
    N_{024}  & \ra & \rho N_{124},\\
    N_{025}  & \ra & \rho N_{125},
     \end{array}$
     
    $
 \begin{array}{lll}
    N_{100}  & \ra & \rho N_{200} \mid \lambda N_{310} N_{000} \mid \lambda N_{314} N_{400},\\
    N_{104}  & \ra & \rho N_{204} \mid \lambda N_{310} N_{004} \mid \lambda N_{314} N_{404},\\
    N_{105}  & \ra & \rho N_{205} \mid \lambda N_{310} N_{005} \mid \lambda N_{314} N_{405},\\
    N_{110}  & \ra & \rho N_{210} \mid \lambda N_{310} N_{010} \mid \lambda N_{314} N_{410} \mid \lambda N_{315} N_{510},\\
    N_{114}  & \ra & \rho N_{214} \mid \lambda N_{310} N_{014} \mid \lambda N_{314} N_{414} \mid \lambda N_{315} N_{514},\\
    N_{115}  & \ra & \rho N_{215} \mid \lambda N_{310} N_{015} \mid \lambda N_{314} N_{415} \mid \lambda N_{315} N_{515},\\
         \end{array}$
     
    $
 \begin{array}{lll}
    N_{120}  & \ra & \rho N_{220} \mid \lambda N_{310} N_{020} \mid \lambda N_{314} N_{420} \mid \lambda N_{315} N_{520},\\
    N_{124}  & \ra & \rho N_{224} \mid \lambda N_{310} N_{024} \mid \lambda N_{314} N_{424} \mid \lambda N_{315} N_{524},\\
    N_{125}  & \ra & \rho N_{225} \mid \lambda N_{310} N_{025} \mid \lambda N_{314} N_{425} \mid \lambda N_{315} N_{525},
         \end{array}$
     
    $
 \begin{array}{lll}
    N_{200}  & \ra & \lambda N_{510} N_{000} \mid \lambda N_{514} N_{400},\\
    N_{204}  & \ra & \lambda N_{510} N_{004} \mid \lambda N_{514} N_{404},\\
    N_{205}  & \ra & \lambda N_{510} N_{005} \mid \lambda N_{514} N_{405},\\
    N_{210}  & \ra & \lambda N_{510} N_{010} \mid \lambda N_{514} N_{410} \mid \lambda N_{515} N_{510},\\
    N_{214}  & \ra & \lambda N_{510} N_{014} \mid \lambda N_{514} N_{414} \mid \lambda N_{515} N_{514},\\
    N_{215}  & \ra & \lambda N_{510} N_{015} \mid \lambda N_{514} N_{415} \mid \lambda N_{515} N_{515},\\
    N_{220}  & \ra & \lambda N_{510} N_{020} \mid \lambda N_{514} N_{420} \mid \lambda N_{515} N_{520},\\
    N_{224}  & \ra & \lambda N_{510} N_{024} \mid \lambda N_{514} N_{424} \mid \lambda N_{515} N_{524},\\
    N_{225}  & \ra & \lambda N_{510} N_{025} \mid \lambda N_{514} N_{425} \mid \lambda N_{515} N_{525},
         \end{array}$
     
    $
 \begin{array}{lll}
    N_{310}  & \ra & \mu \mid \rho N_{610},\\
    N_{314}  & \ra & \rho N_{614},\\
    N_{315}  & \ra & \rho N_{615},
         \end{array}$
     
    $
 \begin{array}{lll}
    N_{400}  & \ra & \rho N_{200} \mid \lambda N_{310} N_{000} \mid \lambda N_{314} N_{400},\\
    N_{404}  & \ra & \rho N_{204} \mid \lambda N_{310} N_{004} \mid \lambda N_{314} N_{404},\\
    N_{405}  & \ra & \rho N_{205} \mid \lambda N_{310} N_{005} \mid \lambda N_{314} N_{405},\\
    N_{410}  & \ra & \rho N_{210} \mid \lambda N_{310} N_{010} \mid \lambda N_{314} N_{410} \mid \lambda N_{315} N_{510},\\
    N_{414}  & \ra & \mu \mid \rho N_{214} \mid \lambda N_{310} N_{014} \mid \lambda N_{314} N_{414} \mid \lambda N_{315} N_{514},\\
    N_{415}  & \ra & \rho N_{215} \mid \lambda N_{310} N_{015} \mid \lambda N_{314} N_{415} \mid \lambda N_{315} N_{515},\\
    N_{420}  & \ra & \rho N_{220} \mid \lambda N_{310} N_{020} \mid \lambda N_{314} N_{420} \mid \lambda N_{315} N_{520},\\
    N_{424}  & \ra & \mu \mid \rho N_{224} \mid \lambda N_{310} N_{024} \mid \lambda N_{314} N_{424} \mid \lambda N_{315} N_{524},\\
    N_{425}  & \ra & \rho N_{225} \mid \lambda N_{310} N_{025} \mid \lambda N_{314} N_{425} \mid \lambda N_{315} N_{525},
         \end{array}$
     
    $
 \begin{array}{lll}
    N_{510}  & \ra & \rho N_{810} \mid \lambda N_{710} N_{010} \mid \lambda N_{714} N_{410} \mid \lambda N_{715} N_{510},\\
    N_{514}  & \ra & \mu \mid \rho N_{814} \mid \lambda N_{710} N_{014} \mid \lambda N_{714} N_{414} \mid \lambda N_{715} N_{514},\\
    N_{515}  & \ra & \rho N_{815} \mid \lambda N_{710} N_{015} \mid \lambda N_{714} N_{415} \mid \lambda N_{715} N_{515},\\
    N_{520}  & \ra & \rho N_{820} \mid \lambda N_{710} N_{020} \mid \lambda N_{714} N_{420} \mid \lambda N_{715} N_{520},\\
    N_{524}  & \ra & \rho N_{824} \mid \lambda N_{710} N_{024} \mid \lambda N_{714} N_{424} \mid \lambda N_{715} N_{524},\\
    N_{525}  & \ra & \mu \mid \rho N_{825} \mid \lambda N_{710} N_{025} \mid \lambda N_{714} N_{425} \mid \lambda N_{715} N_{525},
         \end{array}$
     
    $
 \begin{array}{lll}
    N_{610}  & \ra & \rho N_{810} \mid \lambda N_{710} N_{010} \mid \lambda N_{714} N_{410} \mid \lambda N_{715} N_{510},\\
    N_{614}  & \ra & \rho N_{814} \mid \lambda N_{710} N_{014} \mid \lambda N_{714} N_{414} \mid \lambda N_{715} N_{514},\\
    N_{615}  & \ra & \rho N_{815} \mid \lambda N_{710} N_{015} \mid \lambda N_{714} N_{415} \mid \lambda N_{715} N_{515},
             \end{array}$
     
    $
 \begin{array}{lll}
    N_{710}  & \ra & \rho N_{610},\\
    N_{714}  & \ra & \rho N_{614},\\
    N_{715}  & \ra & \rho N_{615},
             \end{array}$
     
    $
 \begin{array}{lll}
    N_{810}  & \ra & \lambda N_{520} N_{010} \mid \lambda N_{524} N_{410} \mid \lambda N_{525} N_{510},\\
    N_{814}  & \ra & \lambda N_{520} N_{014} \mid \lambda N_{524} N_{414} \mid \lambda N_{525} N_{514},\\
    N_{815}  & \ra & \lambda N_{520} N_{015} \mid \lambda N_{524} N_{415} \mid \lambda N_{525} N_{515},\\
    N_{820}  & \ra & \lambda N_{520} N_{020} \mid \lambda N_{524} N_{420} \mid \lambda N_{525} N_{520},\\
    N_{824}  & \ra & \lambda N_{520} N_{024} \mid \lambda N_{524} N_{424} \mid \lambda N_{525} N_{524},\\
    N_{825}  & \ra & \lambda N_{520} N_{025} \mid \lambda N_{524} N_{425} \mid \lambda N_{525} N_{525}.
 \end{array}$

\bigskip

The next step is to convert nonterminals to generating functions, terminals to $z$ and productions to equations, as described in 
\cite{\Flajolet} I.5.4.

\bigskip

$
 \begin{array}{lll}
 f_{000}  & = & z + z f_{100},\\
      f_{004}  & = & z f_{104},\\
    f_{005}  & = & z f_{105},\\

         \end{array}$
     
    $
 \begin{array}{lll}
     f_{010}  & = & z + z f_{110},\\
    f_{014}  & = & z f_{114},\\
    f_{015}  & = & z f_{115},\\
    f_{020}  & = & z + z f_{120},\\
    f_{024}  & = & z f_{124},\\
    f_{025}  & = & z f_{125},
         \end{array}$
     
    $
 \begin{array}{lll}
    f_{100}  & = & z f_{200} + z f_{310} f_{000} + z f_{314} f_{400},\\
    f_{104}  & = & z f_{204} + z f_{310} f_{004} + z f_{314} f_{404},\\
    f_{105}  & = & z f_{205} + z f_{310} f_{005} + z f_{314} f_{405},\\
    f_{110}  & = & z f_{210} + z f_{310} f_{010} + z f_{314} f_{410} + z f_{315} f_{510},\\
    f_{114}  & = & z f_{214} + z f_{310} f_{014} + z f_{314} f_{414} + z f_{315} f_{514},\\
    f_{115}  & = & z f_{215} + z f_{310} f_{015} + z f_{314} f_{415} + z f_{315} f_{515},\\
    f_{120}  & = & z f_{220} + z f_{310} f_{020} + z f_{314} f_{420} + z f_{315} f_{520},\\
    f_{124}  & = & z f_{224} + z f_{310} f_{024} + z f_{314} f_{424} + z f_{315} f_{524},\\
    f_{125}  & = & z f_{225} + z f_{310} f_{025} + z f_{314} f_{425} + z f_{315} f_{525},
         \end{array}$
     
    $
 \begin{array}{lll}
    f_{200}  & = & z f_{510} f_{000} + z f_{514} f_{400},\\
    f_{204}  & = & z f_{510} f_{004} + z f_{514} f_{404},\\
    f_{205}  & = & z f_{510} f_{005} + z f_{514} f_{405},\\
    f_{210}  & = & z f_{510} f_{010} + z f_{514} f_{410} + z f_{515} f_{510},\\
    f_{214}  & = & z f_{510} f_{014} + z f_{514} f_{414} + z f_{515} f_{514},\\
    f_{215}  & = & z f_{510} f_{015} + z f_{514} f_{415} + z f_{515} f_{515},\\
    f_{220}  & = & z f_{510} f_{020} + z f_{514} f_{420} + z f_{515} f_{520},\\
    f_{224}  & = & z f_{510} f_{024} + z f_{514} f_{424} + z f_{515} f_{524},\\
    f_{225}  & = & z f_{510} f_{025} + z f_{514} f_{425} + z f_{515} f_{525},
         \end{array}$
     
    $
 \begin{array}{lll}
    f_{310}  & = & z + z f_{610},\\
    f_{314}  & = & z f_{614},\\
    f_{315}  & = & z f_{615},
         \end{array}$
     
    $
 \begin{array}{lll}
    f_{400}  & = & z f_{200} + z f_{310} f_{000} + z f_{314} f_{400},\\
    f_{404}  & = & z f_{204} + z f_{310} f_{004} + z f_{314} f_{404},\\
    f_{405}  & = & z f_{205} + z f_{310} f_{005} + z f_{314} f_{405},\\
    f_{410}  & = & z f_{210} + z f_{310} f_{010} + z f_{314} f_{410} + z f_{315} f_{510},\\
    f_{414}  & = & z + z f_{214} + z f_{310} f_{014} + z f_{314} f_{414} + z f_{315} f_{514},\\
    f_{415}  & = & z f_{215} + z f_{310} f_{015} + z f_{314} f_{415} + z f_{315} f_{515},\\
    f_{420}  & = & z f_{220} + z f_{310} f_{020} + z f_{314} f_{420} + z f_{315} f_{520},\\
    f_{424}  & = & z + z f_{224} + z f_{310} f_{024} + z f_{314} f_{424} + z f_{315} f_{524},\\
    f_{425}  & = & z f_{225} + z f_{310} f_{025} + z f_{314} f_{425} + z f_{315} f_{525},
         \end{array}$
     
    $
 \begin{array}{lll}
    f_{510}  & = & z f_{810} + z f_{710} f_{010} + z f_{714} f_{410} + z f_{715} f_{510},\\
    f_{514}  & = & z + z f_{814} + z f_{710} f_{014} + z f_{714} f_{414} + z f_{715} f_{514},\\
    f_{515}  & = & z f_{815} + z f_{710} f_{015} + z f_{714} f_{415} + z f_{715} f_{515},\\
    f_{520}  & = & z f_{820} + z f_{710} f_{020} + z f_{714} f_{420} + z f_{715} f_{520},\\
    f_{524}  & = & z f_{824} + z f_{710} f_{024} + z f_{714} f_{424} + z f_{715} f_{524},\\
    f_{525}  & = & z + z f_{825} + z f_{710} f_{025} + z f_{714} f_{425} + z f_{715} f_{525},
         \end{array}$
     
    $
 \begin{array}{lll}
    f_{610}  & = & z f_{810} + z f_{710} f_{010} + z f_{714} f_{410} + z f_{715} f_{510},\\
    f_{614}  & = & z f_{814} + z f_{710} f_{014} + z f_{714} f_{414} + z f_{715} f_{514},\\
    f_{615}  & = & z f_{815} + z f_{710} f_{015} + z f_{714} f_{415} + z f_{715} f_{515},
             \end{array}$
     
    $
 \begin{array}{lll}
    f_{710}  & = & z f_{610},\\
    f_{714}  & = & z f_{614},\\
    f_{715}  & = & z f_{615},
             \end{array}$
     
    $
 \begin{array}{lll}
    f_{810}  & = & z f_{520} f_{010} + z f_{524} f_{410} + z f_{525} f_{510},\\
    f_{814}  & = & z f_{520} f_{014} + z f_{524} f_{414} + z f_{525} f_{514},\\
    f_{815}  & = & z f_{520} f_{015} + z f_{524} f_{415} + z f_{525} f_{515},\\
    f_{820}  & = & z f_{520} f_{020} + z f_{524} f_{420} + z f_{525} f_{520},\\
    f_{824}  & = & z f_{520} f_{024} + z f_{524} f_{424} + z f_{525} f_{524},\\
    f_{825}  & = & z f_{520} f_{025} + z f_{524} f_{425} + z f_{525} f_{525}.\end{array}$

\bigskip

Using Maple (version 14) we can solve to obtain an expression for the algebraic 
generating function $f_{000}(z)$, which counts the number of words in $\lan\$$ 
of each length. Since words in $\lan\$$ of length $3n+1$ are in bijection with 
permutations in $\mathcal P$ of length $n$, the generating function $\sum_{n\geq 
0} c_nt^n$ where $c_n$ is the number of permutations of length $n$ in $\mathcal 
P$ is obtained by dividing $f_{000}$ by $z$ and substituting $z^3=t$.
\end{proof}

From the expression for the generating function we can easily obtain the first few terms of the sequence:

$1+z+2z^2+6z^3+24z^4+114z^5+592z^6+3216z^7+17904z^8+101198z^9+578208z^{10}+3332136z^{11}+19343408z^{12}+\dots .$

We can also use standard analytic combinatorial methods \cite{MR2483235} to 
deduce the asymptotic growth of the number of such permutations:
\begin{align*}
  c_n \sim \frac{\sqrt{25-11\sqrt{5}}}{2\sqrt{\pi n^3}} \cdot (2+2\sqrt{5})^n 
\cdot \left(1 + O(n^{-1}) \right).
\end{align*}



\end{document}